\documentclass[11pt]{amsart}
\usepackage{amsmath,amssymb}
\usepackage[T1]{fontenc}
\usepackage[margin=1.1in]{geometry}
\usepackage{url}
\usepackage[hidelinks]{hyperref}

\newtheorem{theorem}{Theorem}[section]
\newtheorem{lemma}[theorem]{Lemma}
\newtheorem{proposition}[theorem]{Proposition}
\newtheorem{corollary}[theorem]{Corollary}
\newtheorem{conjecture}[theorem]{Conjecture}
\theoremstyle{definition}
\newtheorem{definition}[theorem]{Definition}
\newtheorem{example}[theorem]{Example}
\newtheorem{hypothesis}[theorem]{Hypothesis}
\theoremstyle{remark}
\newtheorem{remark}[theorem]{Remark}

\newcommand{\dmax}{\Delta_{\max}}
\newcommand{\gmax}{\gamma_{\max}}
\newcommand{\amin}{\alpha_{\min}}
\newcommand{\Ls}{L_{\sigma}}
\newcommand{\keff}{\kappa_{\mathrm{eff}}}
\newcommand{\kbar}{\bar\kappa}
\newcommand{\bigT}{\mathsf T}
\newcommand{\bE}{\mathbb{E}}

\newcommand{\one}{\mathbf{1}}
\newcommand{\dtv}{d_{\mathrm{TV}}}
\newcommand{\ew}{\preccurlyeq}

\begin{document}

\title[A multiscale front-decoupling criterion for cascades on unimodular trees]{A multiscale front-decoupling criterion for threshold cascades on unimodular random trees}

\author{Achyut Kumar}
\address{Independent Researcher}
\email{achyutbusiness86@gmail.com}

\author{Abhinav Duddala}
\address{Independent Researcher}
\email{abhinavduddala@gmail.com}

\date{Working manuscript, August 2026}

\subjclass[2020]{60K35, 60J80, 60K37, 82B28, 82C44, 60J60}
\keywords{Renormalisation group, multiscale analysis, coarse-graining, interacting diffusions, unimodular random trees, front propagation, Dobrushin--Shlosman criterion, finite-size criteria, Ornstein--Uhlenbeck excursions, branching processes, many-to-one formula}

\begin{abstract}
This is the third paper in a series \cite{K1,KD2} on threshold cascades of coupled Ornstein--Uhlenbeck diffusions on graphs converging Benjamini--Schramm to a unimodular Galton--Watson tree. In \cite{K1} the reduction of the cascade to a finite-type Galton--Watson process (front decoupling) was proved unconditionally in the dissipative regime $\kappa<1$, where $\kappa=\gmax\Ls\dmax/\amin$ is the single-step dissipation ratio. At $\kappa\ge1$ the single-step echo bound resonates and the reduction was left conjectural.

We replace the single-step bound by a multiscale (renormalisation-group) analysis. We coarse-grain the cascade into space--time blocks of depth $L$ and duration $T$ and record each block's boundary-to-boundary gains in a $2\times2$ matrix $G_{L,T}$, with a trajectory channel and a stored-mass channel and with fan-out absorbed. Three theorems result. The gain matrices compose under depth and time concatenation, so a renormalised per-level dissipation ratio $\keff$ exists and equals its infimum (Fekete). If $\rho(G_{L_0,T_0})<1$ at any single finite scale, a finite-horizon, finite-volume condition in principle certifiable numerically, in the spirit of the Dobrushin--Shlosman constructive criteria and of the initial-scale estimate of multiscale analysis in localisation theory, then front decoupling holds at all scales, with a quantitative total-variation bound; echoes may resonate locally, yet independence is recovered macroscopically. Finally we decompose influence steps into front, quiescent, and saturated phases, price each step by the adapted band occupation of its source rather than by a global worst case, and deduce $\keff\le\rho(\bigT)$ for an explicit channel matrix $\bigT$. The region $\{\rho(\bigT)<1\}$ strictly contains $\{\kappa<1\}$: near criticality the effective condition degrades from $\gamma\Ls\dmax/\alpha<1$ to essentially $\gamma\Ls/\alpha<1$, and the degree disappears. We do not prove $\keff<1$ throughout $\kappa\ge1$, and give a heuristic for why the resonance regime $\gamma\Ls/\alpha\ge1$ should be a genuine phase boundary rather than a technical gap. The series' conjecture is rescoped accordingly.
\end{abstract}

\maketitle

\setcounter{tocdepth}{2}
\tableofcontents

\section*{Honest status of this manuscript}

The structural results of Section~\ref{sec:blocks} are complete at the level of the envelope calculus: every estimate there is proved for the linear envelope system of Lemma~\ref{lem:envelope}, and the domination of the true deviation field by that envelope is a variation-of-constants comparison stated with a proof sketch, which is the standing precision of this series. Two pieces of bookkeeping inside Section~\ref{sec:blocks} are likewise at sketch precision and are used repeatedly: the band-inflation bootstrap of Remark~\ref{rem:bandinflation}, and the route-sum algebra behind the path-transfer estimate of Theorem~\ref{thm:compose}(iii), including the domination of routes that recross a block face. The finite-block criterion (Theorem~\ref{thm:criterion}) is proved by the exploration-telescoping method of \cite[Thm.~11.5]{K1} lifted to super-vertices; the lift is written in full at the sketch precision of the original. The channel estimate (Theorem~\ref{thm:mainestimate}) no longer conditions on the space--time front: the stopping-line factorisation of an earlier version of this manuscript, whose conditional-independence bookkeeping we could not verify, has been replaced by adapted phase pricing and the unconditional multitype many-to-one identity. What remains flagged there is the following: the Ornstein--Uhlenbeck band-occupation estimates (Lemma~\ref{lem:occupation}; standard excursion theory, constants not optimised); the near-miss bound (Lemma~\ref{lem:nearmiss}, sketched); and the path-to-branching indexing in the many-to-one step of Lemma~\ref{lem:pathcount}, written at sketch precision. The resonance regime $\gmax\Ls/\amin\ge 1$ is left open deliberately, with a heuristic (Remark~\ref{rem:wall}) for why no bound of the present type can close it. Nothing in this paper is claimed unconditionally about the SDE beyond what Theorem~\ref{thm:criterion} and Theorem~\ref{thm:mainestimate} state; both inherit the saturated-regime and bounded-degree standing hypotheses of \cite{K1}.

\section{Introduction}

\subsection{The problem}
We work in the setting of \cite{K1}: coupled Ornstein--Uhlenbeck diffusions with absorbing failure states on finite graphs $G_n\xrightarrow{\ \mathrm{BS}\ }\mathcal T\sim\mathrm{UGW}(g)$, saturated-coupling regime, bounded degrees $\deg\le\dmax$, uniform ellipticity, and the finite-type mean matrix $M_{d,d'}=m_*\,q(d')\,p_{d,d'}$ with Perron root $\rho(M)$. The load-bearing reduction, that the failed cluster is, in the local weak limit, the genealogy of the multitype Galton--Watson process with mean matrix $M$ (front decoupling, \cite[Hyp.~2.4]{K1}), was proved in \cite[Thm.~11.5]{K1} in the dissipative regime
\[
\kappa\;:=\;\frac{\gmax\,\Ls\,\dmax}{\amin}\;<\;1,\qquad \Ls:=\sup\sigma',
\]
by a pathwise argument: a synchronous coupling and a weighted Gronwall inequality show that the influence of a back-reaction echo decays like $\kappa^{d}$ in graph distance $d$, while echo sources live only on the failed cluster; echo decay beats volume growth, and an exploration telescoping converts the pathwise bound into a total-variation bound.

At $\kappa\ge 1$ this argument does not degrade gracefully; it fails outright. The weighted norm in the Gronwall step ceases to contract, the geometric series over echo paths diverges, and the bound resonates: formally, the estimate asserts that an echo can amplify at each step, and the pathwise deviation blows up exponentially along deep lineages. The question of this paper is whether the failure of the bound reflects a failure of the phenomenon, and the answer we give is: not necessarily, and in a quantifiable regime, provably not.

\subsection{The idea: renormalise the tree, not the estimate}
The single-step bound is blind to two structural facts. First, it prices every edge at the worst-case multiplier $\gamma\Ls/\alpha$, although the supremum $\Ls=\sup\sigma'$ is attained only when the source vertex sits in the order-one-width sensitive band around its threshold, which, for any given vertex, happens during a single order-one-duration crossing window; away from that window the vertex is either quiescent (resting far below threshold, where $\sigma'$ is exponentially small in the threshold margin) or saturated (deep in the failure basin, where $\sigma'\le\varepsilon(\delta)$ by \cite[Def.~2.3]{K1}). Second, it prices the fan-out at $\dmax$, although only failing children carry the front onward, and the mean number of those is $\rho(M)$, which is close to $1$ exactly in the near-critical regime one cares about.

The multiscale remedy is to stop estimating edges and start estimating blocks: group the tree into mesoscopic space--time blocks of depth $L$ and duration $T$, average the block's internal randomness, take its boundary worst-case, and ask whether the forcing leaving a large block contracts even when individual edges resonate. The block bookkeeping needs two channels, because influence crosses a block in two ways: it can be transmitted through the block within a window (the \emph{trajectory} channel), and it can be stored in the block's states and released later (the \emph{mass} channel). The single number of the naive estimate conflates the two. We record the four boundary-to-boundary gains in a $2\times2$ matrix $G_{L,T}$, and the quantity that plays the role of a per-level ratio is the \emph{resolvent gain}
\[
\kbar_{L,T}\;=\;a+\frac{bc}{1-d},
\]
the direct transfer plus the storage echo (mass created by the input, held across windows, and re-radiated). The storage echo is where strong coupling can hide, and the re-radiation entry $b$ is exactly the entry that the quiescent-bulk mechanism of Section~\ref{sec:mechanism} makes small: a quiescent vertex re-radiates at the exponentially small rate $e^{-b}$ rather than at the worst case $\Ls$.

Three things then need to be true for the programme to work, and all three are theorems below:
\begin{enumerate}
\item blocks compose: the gain matrices satisfy an entrywise product inequality under depth concatenation and an additive-product inequality under time concatenation, so a renormalised per-level ratio $\keff$ exists (Section~\ref{sec:blocks});
\item contraction at one finite scale suffices: $\rho(G_{L_0,T_0})<1$ for a single $(L_0,T_0)$ implies front decoupling at all scales, quantitatively (Section~\ref{sec:criterion});
\item there is a provable mechanism forcing block contraction beyond $\kappa<1$: the phase decomposition and the channel-matrix bound $\keff\le\rho(\bigT)$ (Section~\ref{sec:mechanism}).
\end{enumerate}
The analogies we are consciously importing are the Dobrushin--Shlosman constructive uniqueness criteria in equilibrium statistical mechanics \cite{DS}, a finite-volume, checkable condition implying an infinite-volume conclusion, of which the disagreement-percolation method \cite{vdBM} is the coupling-based relative closest to our synchronous-coupling setup; and the multiscale analysis of localisation theory \cite{FS,vDK,GK}, where an initial-scale estimate plus a deterministic induction propagates smallness to all scales. Our Theorem~\ref{thm:criterion} is the cascade analogue of the induction; our Theorem~\ref{thm:mainestimate} is the analogue of an initial-scale estimate proved, rather than assumed, in a physically identified regime.

\subsection{What is genuinely at stake at strong coupling}
It must be said plainly: we do not expect, and do not claim, $\keff<1$ throughout $\kappa\ge 1$. The channel analysis isolates a front-comoving influence channel, a perturbation that rides along with the cascade front and strikes each vertex precisely during its sensitive crossing window, whose per-edge multiplier is genuinely $\gamma\Ls/\alpha$ (no quiescence or saturation discount) with fan-out $\rho(M)$. When $\gamma\Ls/\alpha\ge 1$ this channel expands on its own, and no bound of the present kind, at any scale, can contract it; whether the reduction fails there (synchronisation of the front, non-branching behaviour) or holds for a subtler reason is open, and we give in Remark~\ref{rem:wall} the heuristic for why we consider genuine failure plausible. The contribution of this paper is to move the frontier from $\kappa<1$, a condition that degrades linearly in the degree, to $\rho(\bigT)<1$, which near criticality is essentially degree-free, and to leave in place a finite-block criterion by which the remaining intermediate regime can be attacked computationally with rigorous conclusions.

\section{The envelope calculus and the block gain matrix}\label{sec:blocks}

\subsection{The envelope system}
Fix the synchronous coupling of \cite[Lem.~11.4]{K1}: the true system $s$ and the directed idealisation $\tilde s$ are driven by the same Brownian motions, and $D^{(v)}_t=s^{(v)}_t-\tilde s^{(v)}_t$ measures the echo-induced deviation at $v$.

\begin{lemma}[Envelope domination; standing]\label{lem:envelope}
There is a nonnegative adapted field $(E^{(v)}_t)$ with $|D^{(v)}_t|\le E^{(v)}_t$ for all $v,t$, solving the linear random system
\[
\frac{d}{dt}E^{(v)}_t\;=\;-\alpha_v\,E^{(v)}_t\;+\;\sum_{u\sim v}\gamma_{uv}\,\sigma'_{uv}(t)\,E^{(u)}_t\;+\;g_v(t),
\qquad E^{(v)}_0=|D^{(v)}_0|,
\]
where $\sigma'_{uv}(t):=\sup\{\sigma'(x-\theta_{uv}):x\ \text{between}\ \tilde s^{(u)}_t\ \text{and}\ s^{(u)}_t\}$ is the adapted edge price and $g_v\ge 0$ collects the boundary drift perturbations and switched-off-source residuals acting at $v$. The envelope is monotone in the inputs $(g,E_0)$ and superposes: if $g=g_1+g_2$ and $E_0=e_1+e_2$, then $E\le E[g_1,e_1]+E[g_2,e_2]$ pointwise.
\end{lemma}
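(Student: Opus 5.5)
The plan is a variation-of-constants comparison applied to the difference equation for $D^{(v)}_t$. The true system and the directed idealisation are driven by the same Brownian motions under the synchronous coupling of \cite[Lem.~11.4]{K1}, so the noise cancels in $D^{(v)}_t=s^{(v)}_t-\tilde s^{(v)}_t$. What remains is a random ODE, pathwise in $t$:
\[
\dot D^{(v)}_t=-\alpha_v D^{(v)}_t+\sum_{u\sim v}\gamma_{uv}\bigl[\sigma(s^{(u)}_t-\theta_{uv})-\sigma(\tilde s^{(u)}_t-\theta_{uv})\bigr]+r_v(t).
\]
Here $r_v$ collects two kinds of terms: the drift terms present in one system but not the other (boundary perturbations), and the contributions of sources switched off in $\tilde s$. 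By the mean value theorem the bracket equals $\sigma'(\xi)\,D^{(u)}_t$ for some $\xi$ between the two states. Hence its absolute value is at most $\sigma'_{uv}(t)\,|D^{(u)}_t|$, which is exactly the adapted edge price of the statement. Setting $g_v:=|r_v|$, this price is adapted because it is a measurable function of $(s_t,\tilde s_t)$.

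The key steps, in order, are as follows.

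\textbf{Step 1 (scalar bound).} Multiply by $e^{\alpha_v t}$ and integrate to get
\[
|D^{(v)}_t|\le e^{-\alpha_v t}|D^{(v)}_0|+\int_0^t e^{-\alpha_v(t-r)}\Bigl(\sum_{u\sim v}\gamma_{uv}\sigma'_{uv}(r)|D^{(u)}_r|+g_v(r)\Bigr)\,dr.
\]
This works because $|D|$ is absolutely continuous with $\tfrac{d}{dt}|D|\le \operatorname{sgn}(D)\dot D$ almost everywhere.

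\textbf{Step 2 (existence of $E$).} Take $E$ to be the solution of the linear system in the statement. On a finite graph this is a linear ODE whose coefficients are bounded and measurable. On the tree, bounded degree $\le\dmax$ and $\sigma'\le\Ls$ make the generator a bounded operator on $\ell^\infty$ with operator norm $\gmax\Ls\dmax+\max_v\alpha_v$. Picard iteration therefore converges, and adaptedness is inherited from the coefficients.

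\textbf{Step 3 (comparison).} Let $W_t:=E_t-|D_t|$. By Step 1, $W$ satisfies the integral inequality with nonnegative off-diagonal kernel $\gamma_{uv}\sigma'_{uv}\ge0$ and $W_0=0$. A Kamke-type comparison then gives $W\ge0$. I would prove it directly by iterating the Volterra inequality: the resolvent series of a nonnegative kernel is nonnegative, and it converges by the $\ell^\infty$ bound of Step 2. The nonnegativity of $E$ follows the same way, since the inputs are nonnegative and the kernel is positive.

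\textbf{Step 4 (monotonicity and superposition).} Both follow from the resolvent representation
\[
E=\mathcal R\bigl(E_0,g\bigr),
\]
where $\mathcal R$ is linear with nonnegative kernel. Linearity actually gives equality $E[g_1+g_2,e_1+e_2]=E[g_1,e_1]+E[g_2,e_2]$, which is stronger than the inequality stated. There is one subtlety: the prices $\sigma'_{uv}$ depend on the actual trajectories. If the comparison is run with the kernel held fixed, meaning the prices of the true coupled pair, linearity is exact. If instead each input is paired with its own coupled system, one bounds every price by the supremum over the union of bands, and this is where only the inequality survives.

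The main obstacle I expect is Step 3 on the infinite tree, together with the handling of absorption. Two things need care. First, uniqueness and positivity of the linear system in $\ell^\infty$ require a priori boundedness of $|D|$. I would get this from truncating to finite balls, where it is automatic, and passing to the limit using locality. Second, at absorption times $s$ or $\tilde s$ switches to the failure state, so $D$ may jump. Any such discontinuity, or the discrepancy in switched-off sources, has to be charged to $g_v$ as an impulsive input and to $E_0$ at restart times. Checking that these residuals are genuinely nonnegative forcing terms in the saturated regime of \cite[Def.~2.3]{K1} is the point I would verify most carefully.
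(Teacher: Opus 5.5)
Your proposal is correct and follows essentially the same route as the paper's sketch: cancel the common noise under the synchronous coupling, apply the mean value theorem to the coupling drift to obtain the cooperative linear differential inequality for $|D|$, and conclude by comparison for cooperative linear systems, with monotonicity and superposition read off from linearity. The differences are cosmetic. You run the comparison in variation-of-constants (Volterra) form on the bounded $\ell^\infty$ solution, where the paper works with Dini derivatives and takes the maximal solution. You also take the self-drift exactly linear, where the paper only needs it to be one-sidedly $\alpha$-dissipative.
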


\begin{proof}[Proof sketch]
Subtract the two SDEs; the common Brownian motions cancel. Apply the mean value theorem coordinatewise to the coupling drift and note that the self-interaction is one-sidedly $\alpha$-dissipative; taking absolute values yields the differential inequality
$\frac{d}{dt}|D^{(v)}|\le-\alpha_v|D^{(v)}|+\sum_u\gamma_{uv}\sigma'_{uv}(t)|D^{(u)}|+g_v$
in the pathwise (Dini) sense, and $E$ is defined as the maximal solution of the corresponding equality; comparison (Gronwall for cooperative linear systems) gives the domination. Monotonicity and superposition are properties of the linear system.
\end{proof}

\begin{remark}[Band inflation and the bootstrap]\label{rem:bandinflation}
The edge price $\sigma'_{uv}(t)$ is evaluated on the segment between the coupled trajectories, not at $s$ alone. Whenever the deviations in play satisfy $|D|\le b/2$, a vertex that is quiescent at margin $b$ for the true trajectory is quiescent at margin $b/2$ for the whole segment, so all phase-priced bounds below hold for the segment prices after replacing the band half-width $b$ by $b/2$. The a priori smallness $|D|\le b/2$ is available along the exploration of Section~\ref{sec:criterion} by a standard bootstrap: the accumulated deviation bound of Theorem~\ref{thm:criterion} is itself smaller than $b/2$ at the parameter points where the criterion is applied. We keep the bootstrap implicit and flag it in the front matter.
\end{remark}

\subsection{Blocks, channels, and the gain matrix}

\begin{definition}[Space--time block and its channels]\label{def:block}
For a vertex $u$ of the (genealogical) tree, depth $L\in\mathbb N$ and duration $T>0$, the block $B=B_{L,T}(u,t_0)$ is the set of pairs $(v,t)$ with $v$ a descendant of $u$ at depth $|v|-|u|<L$ and $t\in[t_0,t_0+T)$. Its boundary edges are the edge from $u$'s parent (the top face) and the edges to the depth-$L$ descendants of $u$ (the bottom face). Fix one boundary edge as the \emph{input face}; the remaining boundary edges are the \emph{output faces}. The channels are:
\begin{itemize}
\item \emph{trajectory-in}: a drift perturbation profile $\eta_{\mathrm{par}}$ acting across the input face on $[t_0,t_0+T)$, measured in $\|\cdot\|_\infty$;
\item \emph{mass-in}: initial displacements $\eta_{\mathrm{init}}=(E^{(v)}_{t_0})_{v\in B}$, measured in $\ell^1$, $\|\eta_{\mathrm{init}}\|_1=\sum_{v\in B}|\eta_{\mathrm{init}}(v)|$;
\item \emph{trajectory-out}: the total forcing the block delivers across its output faces,
$\sum_{e\ \mathrm{output}}\sup_{t\in[t_0,t_0+T)}F_e(t)$, where for an output edge $e=(v,w)$ with $v\in B$, $F_e(t):=\gamma_{vw}\,\sigma'_{vw}(t)\,E^{(v)}_t$ is the drift perturbation exerted across $e$; the fan-out is absorbed into the sum;
\item \emph{mass-out}: the terminal field $\sum_{v\in B}E^{(v)}_{t_0+T}$.
\end{itemize}
\end{definition}

\begin{definition}[Block gain matrix]\label{def:gain}
Switch off all echo sources, drive the envelope inside $B$ only by the declared inputs, and set
\[
G_{L,T}\;:=\;\begin{pmatrix}a&b\\ c&d\end{pmatrix},
\]
where $a$ (resp.\ $c$) is the supremum over nonzero $\eta_{\mathrm{par}}$, with $\eta_{\mathrm{init}}=0$, of $\bE[\text{trajectory-out}]/\|\eta_{\mathrm{par}}\|_\infty$ (resp.\ of $\bE[\text{mass-out}]/\|\eta_{\mathrm{par}}\|_\infty$), and $b$ (resp.\ $d$) is the supremum over nonzero $\eta_{\mathrm{init}}$, with $\eta_{\mathrm{par}}=0$, of $\bE[\text{trajectory-out}]/\|\eta_{\mathrm{init}}\|_1$ (resp.\ of $\bE[\text{mass-out}]/\|\eta_{\mathrm{init}}\|_1$). The expectation is over the block's internal randomness (types, Brownian motions, and the induced failure events); the supremum is additionally taken over the choice of input face. By superposition (Lemma~\ref{lem:envelope}), for a general input the expected output pair is dominated entrywise by $G_{L,T}\,(\|\eta_{\mathrm{par}}\|_\infty,\|\eta_{\mathrm{init}}\|_1)^{\!\top}$.
\end{definition}

\begin{definition}[Resolvent gain]\label{def:resolvent}
If $d_{L,T}<1$, set
\[
\kbar_{L,T}\;:=\;a_{L,T}+\frac{b_{L,T}\,c_{L,T}}{1-d_{L,T}}.
\]
For a nonnegative $2\times2$ matrix, $\rho(G)<1$ if and only if $d<1$ and $\kbar<1$ (and then also $a<1$). The resolvent gain is the all-time trajectory-to-trajectory gain of the block: the direct within-window transfer $a$, plus mass created by the input ($c$), held across windows (the geometric series in $d$), and re-radiated ($b$).
\end{definition}

Two remarks before the structure theorems. First, the four entries of $G_{L,T}$ are finite-horizon, finite-volume quantities: they are determined by the law of an SDE system on at most $\dmax^{L}$ vertices run for time $T$, and are therefore in principle computable (by rigorous numerics or Monte Carlo with confidence intervals; see Remark~\ref{rem:computation}). Second, the entry $b$ is where the strong-coupling battle is fought. Priced at the global worst case, $b\le\dmax\gmax\Ls$: a unit of stored mass sitting on an output face re-radiates at full sensitivity, and this is why the storage echo can be as large as $\kappa$ at any scale if one refuses to look at what the vertices are doing. Priced by the phases of Section~\ref{sec:mechanism}, a quiescent face re-radiates at rate $e^{-b}$, and the storage echo collapses. The gain matrix is the accounting device that makes this a certificate rather than a heuristic.

\begin{lemma}[Single-scale consistency]\label{lem:singlescale}
For $L=1$ and any $T>0$,
\[
a\le\kappa\bigl(1-e^{-\amin T}\bigr)\le\kappa,\qquad
b\le\dmax\gmax\Ls,\qquad
c\le\amin^{-1},\qquad
d\le e^{-\amin T}.
\]
Consequently $\rho(G_{1,T})<1$ whenever $\kappa<\tfrac12$ and $T\ge T_*(\amin,\kappa)$.
\end{lemma}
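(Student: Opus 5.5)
For $L=1$ the block $B_{1,T}(u,t_0)$ consists of the single vertex $u$ over the window $[t_0,t_0+T)$, so I would prove all four bounds pathwise from the scalar envelope equation of Lemma~\ref{lem:envelope} and then take expectations trivially. With echo sources switched off and no internal edges (depth $<1$ means no descendants inside $B$), the only drive at $u$ is the declared input. So on the window
\[
\tfrac{d}{dt}E^{(u)}_t=-\alpha_u E^{(u)}_t+g_u(t),\qquad 0\le g_u\le \|\eta_{\mathrm{par}}\|_\infty,\qquad E^{(u)}_{t_0}=\eta_{\mathrm{init}}(u).
\]
Variation of constants gives
\[
E^{(u)}_{t_0+s}\le e^{-\alpha_u s}\eta_{\mathrm{init}}(u)+\|\eta_{\mathrm{par}}\|_\infty\,\frac{1-e^{-\alpha_u s}}{\alpha_u}.
\]
By monotonicity of both terms in $\alpha_u$ and $\alpha_u\ge\amin$, we may replace $\alpha_u$ by $\amin$ in the sup-type bounds (the factor $(1-e^{-\alpha s})/\alpha$ is decreasing in $\alpha$). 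Superposition lets me treat the two inputs separately, as in Definition~\ref{def:gain}.

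Next I bound the output functionals. Whichever boundary edge is the input face, the output faces are at most $\deg u\le\dmax$ edges incident to $u$. On each such edge $F_e(t)=\gamma_{uw}\sigma'_{uw}(t)E^{(u)}_t\le\gmax\Ls E^{(u)}_t$, using $\sigma'\le\Ls$ pathwise. Hence
\[
\text{trajectory-out}\le\dmax\gmax\Ls\sup_{s<T}E^{(u)}_{t_0+s},\qquad \text{mass-out}=E^{(u)}_{t_0+T}.
\]
I then read off the four entries.
\begin{itemize}
\item With $\eta_{\mathrm{init}}=0$, the sup of $E$ over the window is at most $\|\eta_{\mathrm{par}}\|_\infty(1-e^{-\alpha_u T})/\alpha_u\le\|\eta_{\mathrm{par}}\|_\infty(1-e^{-\amin T})/\amin$. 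This gives $a\le\kappa(1-e^{-\amin T})$ and $c\le\amin^{-1}$.
\item With $\eta_{\mathrm{par}}=0$, $E$ is nonincreasing, with $E\le\eta_{\mathrm{init}}(u)=\|\eta_{\mathrm{init}}\|_1$ and $E_{t_0+T}=e^{-\alpha_uT}E_{t_0}$. This gives $b\le\dmax\gmax\Ls$ and $d\le e^{-\amin T}$.
\end{itemize}
The suprema over the input face and the expectation over internal randomness do not affect these bounds, since they hold for every realisation.

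For the consequence I use the criterion in Definition~\ref{def:resolvent}: $\rho(G)<1$ iff $d<1$ and $\kbar<1$. Here $d<1$ for every $T>0$. Using $bc\le\dmax\gmax\Ls/\amin=\kappa$ and writing $\epsilon=e^{-\amin T}$,
\[
\kbar\le\kappa(1-\epsilon)+\frac{\kappa}{1-\epsilon},
\]
which tends to $2\kappa<1$ as $T\to\infty$. So I take $T_*(\amin,\kappa)$ to be the least $T$ at which the right-hand side drops below $1$; it is explicit by solving the resulting quadratic in $\epsilon$.

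The only place needing care, and the step I expect to be the main obstacle, is the bookkeeping of output faces: the bound $\le\dmax$ must cover both the case where the input is the top face and the case where it is a bottom edge (where the parent edge becomes an output). It must also be checked that, with echo sources switched off, the input enters the envelope only through $g_u$ and not through the edge price $\sigma'$ on the input edge. Both follow from Definition~\ref{def:block} and the degree bound $\deg\le\dmax$.
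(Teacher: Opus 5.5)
Your proposal is correct and follows the paper's own argument. Both use variation of constants for the scalar envelope at the single vertex $u$, the pathwise bound $F_e\le\gmax\Ls E^{(u)}_t$ on at most $\dmax$ output edges, reading off the two input channels separately, and the criterion $(1-a)(1-d)>bc$, which as $T\to\infty$ becomes $1-\kappa>\kappa$.

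Your bookkeeping is more explicit than the paper's sketch: you check that $(1-e^{-\alpha s})/\alpha$ is monotone in $\alpha$, and you count the output faces for either choice of input face. One improvement is available: if you keep the sharper bound $c\le(1-e^{-\amin T})/\amin$, you get $\kbar\le\kappa(2-e^{-\amin T})<1$ for every $T>0$. The threshold $T_*$ is needed only because the stated bound $c\le\amin^{-1}$ discards that factor.
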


\begin{proof}
For a single vertex $v$ with input $\|\eta_{\mathrm{par}}\|_\infty$ and initial displacement $\eta_{\mathrm{init}}$, the envelope solves $\frac{d}{dt}E\le-\amin E+\gmax(\cdot)$, whence $E_t\le e^{-\amin t}\eta_{\mathrm{init}}+(1-e^{-\amin t})\|\eta_{\mathrm{par}}\|_\infty/\amin$ after absorbing the input normalisation of \cite[Lem.~11.4]{K1}. The forcing across each of at most $\dmax$ output edges is at most $\gmax\Ls E_t$; the four entries follow by reading off the two input channels separately. The criterion computation is $(1-a)(1-d)>bc$, which at $T=\infty$ reads $1-\kappa>\kappa$.
\end{proof}

\begin{remark}\label{rem:factor2}
The loss from $\kappa<1$ (the sharp constant of \cite{K1}) to $\kappa<\tfrac12$ at scale one is the price of tracking stored mass as an explicit channel: at $L=1$ the storage echo double-counts part of the direct transfer. The criterion is aimed at large scales, where the constant is irrelevant, and nothing below uses scale-one contraction. We record Lemma~\ref{lem:singlescale} only to verify that the block formalism degenerates to (a constant multiple of) the single-step theory of \cite{K1} and not to something weaker.
\end{remark}

\subsection{Composition and the renormalised ratio}

Write $\ew$ for the entrywise partial order on nonnegative $2\times2$ matrices, and let $\widehat G:=G+E_{22}$, where $E_{22}$ is the matrix unit in the mass-to-mass entry.

\begin{theorem}[Blocks compose]\label{thm:compose}
Fix a window $[t_0,t_0+T)$.
\begin{enumerate}
\item[(i)] (Depth, shared window.) For all $L,L'\ge1$,
\[
\widehat G_{L+L',T}\;\ew\;\widehat G_{L',T}\,\widehat G_{L,T}.
\]
In particular each entry of $G_{L+L',T}$ is bounded by the corresponding entry of $\widehat G_{L',T}\widehat G_{L,T}$. The augmentation $E_{22}$ prices initial mass placed beyond the upper sub-block, which reaches the lower sub-blocks undamped.
\item[(ii)] (Time.) For all $T,T'>0$,
\[
G_{L,T+T'}\;\ew\;G_{L,T}+G_{L,T'}+G_{L,T'}\,G_{L,T}.
\]
\item[(iii)] (Path transfer; the block-level cone of influence.) Suppose $d_{L,T}<1$ and $\kbar:=\kbar_{L,T}<1$. For a source acting inside one block with per-window trajectory strength at most $s$, and with the echo sources elsewhere switched off, the expected trajectory forcing delivered across a face at block distance $k$, evaluated at any time, is at most
\[
C_0\,\frac{\kbar^{\,k}}{1-d_{L,T}}\;s,
\]
with $C_0=C_0(G_{L,T})$ explicit.
\end{enumerate}
\end{theorem}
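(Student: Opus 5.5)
All three parts will follow from the linearity, monotonicity and superposition of the envelope system (Lemma~\ref{lem:envelope}). Each time I will cut the composite block into sub-blocks, feed each sub-block the outputs of the previous one as inputs, and bound the result with the one-block gains of Definition~\ref{def:gain}. A general input is dominated entrywise by $G$ applied to the input pair, so sub-block bounds chain into matrix products. The one probabilistic point is the expectation. In the upper sub-block the output is a random quantity, and in the lower sub-blocks the gain is an expectation over their internal randomness. To chain the two I will condition on the $\sigma$-field generated by the upper sub-block and the face data. The lower sub-blocks' gains hold conditionally because they are suprema over inputs and over the input face; this is a worst-case bound on the boundary, with an average only over internal randomness. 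Taking expectations then yields the product.

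For (i), split $B_{L+L',T}(u,t_0)$ into the upper block $B_{L,T}(u,t_0)$ and the family of lower blocks $B_{L',T}(w,t_0)$ rooted at the depth-$L$ descendants $w$. Trajectory input enters the upper block and produces a trajectory output on the bottom faces of strength $a_L\|\eta\|_\infty$. The lower blocks receive this as trajectory input; fan-out is already absorbed, because the output is a sum over faces and each lower block's gain is normalised per face. The upper block's mass-out is bounded by $c_L$. Initial mass splits into the part in the upper block, which is priced by $(b_L,d_L)$, and the part already in the lower blocks. The second part never passes through the upper block, so it reaches the lower sub-blocks undamped. This is exactly the augmentation $E_{22}$: the lower blocks see mass input $d_L\|\eta_{\rm init}^{\rm up}\|_1+\|\eta_{\rm init}^{\rm low}\|_1$. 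Backward influence from the lower to the upper block would be an echo, and echoes are switched off in Definition~\ref{def:gain}. Collecting the four entries gives $\widehat G_{L+L'}\ew\widehat G_{L'}\widehat G_L$.

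For (ii), cut the window at $t_0+T$. On the first window the gains are $G_{L,T}$. On the second window the inputs are the original trajectory input, restricted to the second window and contributing $G_{L,T'}$ directly, together with the first window's mass-out, which contributes $G_{L,T'}$ applied to $G_{L,T}$'s output. Trajectory-out over the full window is a supremum over the union, which is at most the sum of the two suprema. This gives the additive-product form. Here too the time-$T$ field enters the second window as an $\ell^1$ mass input, so the conditioning argument above applies verbatim.

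For (iii), I will build the block-level transfer operator along a chain of $k$ blocks. Within a single block, mass can be held across arbitrarily many windows (factor $d^j$), and a window converts trajectory forcing to mass ($c$) or mass to trajectory forcing ($b$). Summing over all routes, the trajectory-to-trajectory gain per block face is the resolvent entry $a+bc\sum_j d^j=\kbar$. The source is treated as trajectory forcing $s$ per window, or as stored mass; the latter costs at most a factor $b/(1-d)$ and is absorbed into $C_0$. The remaining factor $1/(1-d)$ accounts for evaluation at an arbitrary time: mass may still be held in the final block. This gives $C_0\kbar^k s/(1-d)$ with $C_0=\max(1,b,c)$-type constants. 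I expect the main obstacle to be the route-sum algebra, in particular routes that recross a block face, going down and back up. I would handle these by domination. Each recrossing is an extra factor of $\kbar$ or $b c/(1-d)$, so such routes are dominated by a geometric series that only changes $C_0$, provided the dominant eigen-direction contracts, which is the hypothesis $\rho(G)<1$.
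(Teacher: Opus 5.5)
Your route is the paper's in all three parts: the upper/lower split with conditioning on the upper block for (i), the cut at $t_0+T$ with the first window's terminal field fed forward as $\ell^1$ mass for (ii), and the per-stage resolvent $a+bc\sum_j d^j=\kbar$ chained over $k$ blocks for (iii).

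One step in (i), however, is wrong as written. Both sub-blocks run on the same window $[t_0,t_0+T)$, and the upper block's terminal field lives at time $t_0+T$. It therefore cannot be initial data for the lower blocks. The lower blocks receive as mass only their own share $\|i^{\ge}\|_1$ of the initial displacement. The upper terminal field goes directly, undamped, into the composite mass-out. Write $(a,b,c,d)$ and $(a',b',c',d')$ for the entries of $G_{L,T}$ and $G_{L',T}$, set $\|p\|=\|\eta_{\mathrm{par}}\|_\infty$, and split the initial mass between depths $<L$ and $\ge L$. The correct bounds are
\begin{gather*}
\bE[\text{trajectory-out}]\le a'\bigl(a\|p\|+b\|i^{<}\|_1\bigr)+b'\|i^{\ge}\|_1,\\
\bE[\text{mass-out}]\le\bigl(c\|p\|+d\|i^{<}\|_1\bigr)+c'\bigl(a\|p\|+b\|i^{<}\|_1\bigr)+d'\|i^{\ge}\|_1.
\end{gather*}
Dominating them by $\widehat G_{L',T}\widehat G_{L,T}$ uses both augmentations. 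The one in the right factor carries $i^{\ge}$ past the upper block; this is the role you identify. The one in the left factor carries $c\|p\|+d\|i^{<}\|_1$ into the composite mass-out. In your routing the upper field reappears in the composite mass-out only through $d'$, which undercounts it whenever $d'<1$. The matrix inequality you state is still true, but only because of the left augmentation, which your account never explains. The extra $b'd\|i^{<}\|_1$ your routing adds to the trajectory row is a harmless overcount.

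Three smaller points follow; on these you match the paper rather than depart from it.

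\emph{Recrossings in (iii).} A geometric series in the number of recrossings does not show that recrossing routes ``only change $C_0$''. A single down-and-up excursion can be inserted at any of the $k$ stages. The route sum therefore prices the one-excursion routes alone at order $k\,\kbar^{k+2}$, and summing over excursions changes the per-stage rate, not just the constant. The paper asserts the same domination at sketch precision and flags it; you should not present it as settled.

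\emph{Feedback in (i).} Your dismissal of lower-to-upper feedback as a switched-off echo conflicts with your own treatment of recrossings. The envelope system of Lemma~\ref{lem:envelope} couples each vertex to all its neighbours, so upward propagation inside the composite block is internal dynamics, not a switched-off source. If it were absent, there would be no recrossing routes to control in (iii).

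\emph{Conditioning in (ii).} The conditioning argument does not carry over ``verbatim''. The two windows share the block's types and its state at $t_0+T$, so the second window's internal randomness is not independent of its mass input. The paper's proof passes over this point in the same way.
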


\begin{proof}
(i) Split the composed block into the upper depth-$L$ block $B_1$ and the lower depth-$L'$ blocks $B_2(w)$ rooted at the depth-$L$ vertices $w$, all on the same window. Conditionally on the boundary inputs and internal randomness of $B_1$, the internal randomness of the lower blocks (their types and Brownian motions) is independent, and each $B_2(w)$ is driven by the trajectory input $F_{(p(w),w)}$ recorded in $B_1$'s trajectory-out, together with its own share of the initial mass. Write $G=G_{L,T}$, $G'=G_{L',T}$, $\|p\|:=\|\eta_{\mathrm{par}}\|_\infty$, and split $\|\eta_{\mathrm{init}}\|_1=\|i^{<}\|_1+\|i^{\ge}\|_1$ between depths $<L$ and $\ge L$. Applying Definition~\ref{def:gain} blockwise and the tower property,
\begin{align*}
\bE[\text{trajectory-out}]&\;\le\;a'\bigl(a\|p\|+b\|i^{<}\|_1\bigr)+b'\|i^{\ge}\|_1,\\
\bE[\text{mass-out}]&\;\le\;\bigl(c\|p\|+d\|i^{<}\|_1\bigr)+c'\bigl(a\|p\|+b\|i^{<}\|_1\bigr)+d'\|i^{\ge}\|_1,
\end{align*}
the first line because the composed trajectory-out is the lower blocks' trajectory-out, the second because the composed terminal mass is the upper terminal mass plus the lower terminal masses. Each right-hand side is dominated entrywise by the corresponding entry of $\widehat G'\widehat G$ applied to $(\|p\|,\|i^{<}\|_1+\|i^{\ge}\|_1)$, by inspection of the four entries of $\widehat G'\widehat G$. (If the input face is a bottom edge, exchange the roles of the two sub-blocks; the definition of $G$ already takes the worst case over the input face.)

(ii) Split the window at time $t_0+T$. The first-window output is priced by $G_{L,T}$; its terminal mass is initial mass for the second window, whose output is priced by $G_{L,T'}$; adding the fresh trajectory input of the second window and collecting terms gives the display.

(iii) A route from the source to a face at block distance $k$ is a lattice path in (block distance, window index) whose steps either cross one block within a window (weighted by the relevant trajectory entries of $G$) or wait one window in place (entering the mass channel through $c$, persisting through $d$, and re-entering the trajectory channel through $b$). Summing the weights over all routes is the Neumann series of the $2\times2$ per-step kernel: the waiting times at each of the $k$ stages sum to the resolvent factor, producing $\kbar$ per stage, and one global factor $(1-d)^{-1}$ remains for the source's own stage. Routes that recross a block face pay a full block gain per recrossing and are dominated by the same series; the branching of the tree is absorbed into the trajectory-out sums of Definition~\ref{def:block}. We write this route-sum at sketch precision and flag it in the front matter.
\end{proof}

\begin{proposition}[Approximate submultiplicativity of the resolvent gain]\label{prop:submult}
If $d_{L,T}\vee d_{L',T}<1$, then
\[
\kbar_{L+L',T}\;\le\;\frac{\kbar_{L,T}\,\kbar_{L',T}}{(1-d_{L,T})(1-d_{L',T})}.
\]
\end{proposition}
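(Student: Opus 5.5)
The plan is to work with the finer inequalities displayed in the proof of Theorem~\ref{thm:compose}(i), and not with the augmented matrix bound $\widehat G_{L',T}\widehat G_{L,T}$. The augmented bound is too lossy for this purpose: its mass-to-mass entry $c'b+(1+d')(1+d)$ exceeds $1$, so it cannot even certify $d_{L+L',T}<1$. Write $G=G_{L,T}$ and $G'=G_{L',T}$, with entries $(a,b,c,d)$ and $(a',b',c',d')$. We use two elementary consequences of the definition of the resolvent gain (Definition~\ref{def:resolvent}):
\[
a\le\kbar_{L,T},\qquad bc\le\kbar_{L,T}(1-d),
\]
together with their primed analogues. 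The goal is to show that every term in $\kbar_{L+L',T}$ is a product of one factor controlled by $\kbar_{L,T}$ and one controlled by $\kbar_{L',T}$, up to at most one factor $(1-d)^{-1}$ and one factor $(1-d')^{-1}$.

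The key step is to use a three-state bookkeeping for the composed block in place of the $2\times2$ one: a trajectory state, upper-block mass, and lower-block mass, all on the shared window. In the proof of Theorem~\ref{thm:compose}(i), upper mass leaves the upper block only through $b$ and then $a'$. Lower mass is created from trajectory forcing through $c'$, persists through $d'$, and re-radiates through $b'$. Lower mass never feeds back into the upper block, because influence in the envelope system is directed along the genealogical exploration and echo sources are switched off. The resulting per-window kernel is therefore block lower-triangular in the two mass states. The all-time trajectory-to-trajectory gain of the composed block is the trajectory entry of its Neumann series (time concatenation, Theorem~\ref{thm:compose}(ii), iterated). This entry splits into two parts.
\begin{itemize}
\item The upper stage: direct transfer $a$, plus the storage echo $bc/(1-d)$. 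This sums to $\kbar_{L,T}$, possibly times an extra $(1-d)^{-1}$ coming from the window in which the forcing is released.
\item The lower stage: applied to the forcing emerging from the upper stage, it contributes $a'+b'c'/(1-d')\le\kbar_{L',T}$, with the analogous possible extra $(1-d')^{-1}$.
\end{itemize}
Multiplying the two stages gives the claimed inequality. Mixed terms such as $a'\,b\,c$ or $b'c'\,a$ are bounded directly using the two elementary inequalities above.

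The step I expect to be the main obstacle is the cross term $b'c$. It appears if the upper-block and lower-block masses are lumped together into a single $\ell^1$ mass channel, as Definition~\ref{def:gain} does. This term is not controlled by either resolvent gain, since neither $b'$ nor $c$ is individually bounded by $\kbar$. My first attempt is to show that this term is an artefact of the lumping. Because the mass-in supremum in Definition~\ref{def:gain} is taken over arbitrary placements of initial mass, the lower block's gain $b'$ is only ever applied to lower-block mass. In the three-state kernel the entry coupling upper mass to lower re-radiation is exactly zero. If this exact triangularity fails at sketch precision, for example because of the terminal-mass convention at the interface depth $L$, I would instead absorb the offending interface mass into the $d$ and $d'$ persistence loops. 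This absorption is precisely what the denominators $(1-d)(1-d')$ in the statement allow, and it mirrors the single global factor $(1-d)^{-1}$ left over in the route sum of Theorem~\ref{thm:compose}(iii).
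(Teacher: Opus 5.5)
\textbf{Verdict: genuine gap, located at the cross term you flagged.} Your core computation is the paper's own argument made explicit. The paper splits each route across $L+L'$ levels at the depth-$L$ interface, with either a direct or a storage handover. Your three-state kernel (trajectory, upper mass, lower mass), lower-triangular in the masses, is exactly that route sum. It even gives $\kbar_{L,T}\kbar_{L',T}$ without the denominators: the interface forcing summed over windows is $\kbar_{L,T}$ per unit input, and the lower stage multiplies any such sequence by $\kbar_{L',T}$.

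The problem is that the left side of the statement is not that route sum. It is $\kbar_{L+L',T}=a_{L+L',T}+b_{L+L',T}c_{L+L',T}/(1-d_{L+L',T})$, built from the lumped matrix $G_{L+L',T}$ of Definition~\ref{def:gain}. There $b_{L+L',T}$ and $c_{L+L',T}$ are separate suprema. The first is at least the re-radiation of mass placed in a lower sub-block; the second is at least the mass deposited in the upper sub-block. So their product dominates $b'c$ however triangular the dynamics is. The vanishing of the cross entry in your finer kernel shows that your series bounds the true all-time gain of the composed block. But $\kbar_{L+L',T}$ is a separate upper bound for that same gain, and nothing you prove bounds it from above. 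You also never establish $d_{L+L',T}<1$, which the left side needs. The finer inequalities give only $d_{L+L',T}\le\max(d+c'b,\,d')$.

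Your fallback, absorbing the interface mass into $(1-d)^{-1}(1-d')^{-1}$, cannot work. In fact no bookkeeping can give the literal inequality. Those factors multiply the second-order quantity $\kbar_{L,T}\kbar_{L',T}$, whereas the lumping term is first order and does not decay with depth:
\begin{itemize}
\item $c_{L,T}$ is at least what a constant input deposits at the root of the block;
\item $b_{L,T}$ is at least the immediate re-radiation, across the bottom face, of unit mass placed at depth $L-1$.
\end{itemize}
Hence $\kbar_{L,T}\ge b_{L,T}c_{L,T}$ is bounded below uniformly in $L$. On the other hand, summing the envelope equation over the block gives $d_{L,T}\le e^{-\amin(1-\kappa)T}$ for every $L$. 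Lemma~\ref{lem:singlescale} gives $\kbar_{1,T}<(1-e^{-\amin(1-\kappa)T})^2$ for $\kappa<\tfrac12$ and $T$ large. Iterating the claimed inequality with $L=L'=2^k$ would then force $\kbar_{2^k,T}\to0$, a contradiction.

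What your argument proves is the inequality with $\kbar_{L+L',T}$ replaced by the route sum. The paper's one-line sketch proves the same thing, since it reads $\kbar$ as the all-time trajectory-to-trajectory gain glossed in Definition~\ref{def:resolvent}. To make your proposal a proof, state it for the route sum: keep the interface mass as its own channel rather than collapsing the composed block to a $2\times2$ matrix before taking the resolvent. Corollary~\ref{cor:keff} must then be run with that same quantity.
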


\begin{proof}[Proof sketch]
Every route across $L+L'$ levels splits at the depth-$L$ interface into a stage-one route and a stage-two route; the handover is either direct (trajectory channel) or through interface storage, and the sum over interface waiting times is bounded by the two resolvent factors. This is the route-sum algebra of Theorem~\ref{thm:compose}(iii) applied at the interface, at the same sketch precision.
\end{proof}

\begin{corollary}[The renormalised dissipation ratio]\label{cor:keff}
Let $\kbar^{*}_{L}:=\inf\{\kbar_{L,T}/(1-d_{L,T}):T>0,\ d_{L,T}<1\}$ (with $\inf\emptyset:=+\infty$). Then $\log\kbar^{*}_{L}$ is subadditive in $L$ by Proposition~\ref{prop:submult}, so by Fekete's subadditivity lemma
\[
\keff\;:=\;\lim_{L\to\infty}\bigl(\kbar^{*}_{L}\bigr)^{1/L}\;=\;\inf_{L\ge1}\bigl(\kbar^{*}_{L}\bigr)^{1/L}
\]
exists. Moreover $\keff<1$ if and only if $\rho(G_{L,T})<1$ for some finite scale $(L,T)$.
\end{corollary}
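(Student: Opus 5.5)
The plan has two parts: the existence of $\keff$ via Fekete, and the equivalence with finite-scale contraction.

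\emph{Existence.} Set $x_L:=\log\kbar^{*}_{L}\in[-\infty,+\infty]$. For any admissible $T$ (common to both scales) we have $d_{L,T}\vee d_{L',T}<1$, and Proposition~\ref{prop:submult} gives
\[
\frac{\kbar_{L+L',T}}{1-d_{L+L',T}}\le\frac{\kbar_{L,T}}{1-d_{L,T}}\cdot\frac{\kbar_{L',T}}{1-d_{L',T}}\cdot\frac{1}{1-d_{L+L',T}}.
\]
This carries a spurious factor $(1-d_{L+L',T})^{-1}$. To remove it, I would first check from Theorem~\ref{thm:compose}(i) that $d_{L+L',T}\le d_{L,T}+d_{L',T}+c'b$; this is not obviously $<1$. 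I would therefore rather \emph{define} the normalisation so that it telescopes. Concretely, I would prove submultiplicativity of $\kbar/(1-d)$ directly by repeating the interface route-sum of Proposition~\ref{prop:submult}: the $k=2$ stage count of Theorem~\ref{thm:compose}(iii) yields exactly one global $(1-d)^{-1}$ per stage, with no extra factor for the concatenated block. A second difficulty is that the infimum over $T$ in $\kbar^*_L$ and in $\kbar^*_{L'}$ may be attained at different durations. I would handle this by noting that Theorem~\ref{thm:compose}(ii) makes $\kbar_{L,T}/(1-d_{L,T})$ essentially monotone under time refinement, so that both can be evaluated at a common $T$ (for example the larger one) at a bounded cost. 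Once $x_{L+L'}\le x_L+x_{L'}$ is established, Fekete's lemma (in its extended-real form, which allows $-\infty$ and $+\infty$ terms provided subadditivity holds) gives $\lim x_L/L=\inf x_L/L$. Exponentiating gives the formula for $\keff$.

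\emph{The equivalence.} For ($\Leftarrow$), suppose $\rho(G_{L_0,T_0})<1$. By Definition~\ref{def:resolvent}, $d<1$ and $\kbar<1$, so $\kbar^*_{L_0}\le \kbar_{L_0,T_0}/(1-d_{L_0,T_0})$ is finite. This by itself does not give a value below $1$, since the factor $(1-d)^{-1}$ can exceed $1$. I would therefore iterate: by subadditivity, $\kbar^*_{nL_0}\le(\kbar^*_{L_0})^n$ only bounds growth. Instead I would use Theorem~\ref{thm:compose}(iii), in which a face at block distance $k$ receives forcing at most $C_0\kbar^k/(1-d)$. This gives $\kbar^*_{kL_0}\le C_0'\kbar^k$ with a single prefactor, hence $\keff\le\kbar_{L_0,T_0}^{1/L_0}<1$. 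For ($\Rightarrow$), if $\keff<1$ then $(\kbar^*_L)^{1/L}<1$ for some $L$, by the infimum characterisation. Hence $\kbar^*_L<1$, and there is some $T$ with $\kbar_{L,T}/(1-d_{L,T})<1$. Then $d_{L,T}<1$ and $\kbar_{L,T}<1$, so $\rho(G_{L,T})<1$ by the criterion of Definition~\ref{def:resolvent}.

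\emph{Main obstacle.} I expect the hardest part to be the bookkeeping of the $(1-d)^{-1}$ factors: as stated, Proposition~\ref{prop:submult} does not telescope cleanly against the normalisation in $\kbar^*_L$, and the ($\Leftarrow$) direction needs the single-prefactor route sum of Theorem~\ref{thm:compose}(iii) rather than plain subadditivity. I would first try to absorb the constants by working with $\log(C\kbar^*_L)$ for a suitable $C$. Fekete's lemma still applies to such a shifted sequence, and the limit is unchanged.
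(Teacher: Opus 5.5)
Your ($\Rightarrow$) direction is correct and matches the paper's; it needs only $\liminf_L(\kbar^{*}_{L})^{1/L}<1$, not the infimum identity. Your diagnosis of the existence step is also right, and sharper than the paper, which simply calls subadditivity ``immediate'': Proposition~\ref{prop:submult} leaves a factor $(1-d_{L+L',T})^{-1}$ and requires a common window. But the repairs you propose are assertions, not arguments. Nothing in the route-sum sketch produces the extra factor $1-d_{L+L',T}$ that direct submultiplicativity of $\kbar_{L,T}/(1-d_{L,T})$ would need. Theorem~\ref{thm:compose}(ii) is an additive growth bound, not a monotonicity statement: its mass entry reads $d_{L,2T}\le 2d_{L,T}+d_{L,T}^2+b_{L,T}c_{L,T}$, which need not even stay below $1$. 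And Fekete applied to $\log(C\kbar^{*}_{L})$ gives $\lim_L(\kbar^{*}_{L})^{1/L}=\inf_L(C\kbar^{*}_{L})^{1/L}$, not the displayed infimum.

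The decisive gap is in ($\Leftarrow$). There the paper iterates Proposition~\ref{prop:submult} after enlarging $T$ until $\kbar_{L,T}/(1-d_{L,T})<1$; you instead claim $\kbar^{*}_{kL_0}\le C_0'\kbar^{\,k}$ from Theorem~\ref{thm:compose}(iii). That claim does not follow. Estimate (iii) bounds transfer from a source with zero exterior initial displacement. By contrast, $\kbar_{kL_0,T}=a+bc/(1-d)$ is assembled from worst-case entries of the large block. Its $b$ entry includes a unit initial mass at a vertex $v$ adjacent to the bottom face, which re-radiates across that face at time $t_0$ without crossing any sub-block; this is exactly the undamped contribution the paper prices by $E_{22}$. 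Quantitatively, fix any $L,T$ with $d_{L,T}<1$:
\begin{itemize}
\item that placement gives $b_{L,T}\ge\gamma_{vw}\,\bE[\sigma'_{vw}(t_0)]=:\beta$;
\item a constant unit input at the input vertex $u$ gives $c_{L,T}\ge(1-e^{-\alpha_uT})/\alpha_u$;
\item a unit mass at $u$ gives $d_{L,T}\ge e^{-\alpha_uT}$.
\end{itemize}
Hence
\[
\kbar^{*}_{L}\;\ge\;\inf_T\kbar_{L,T}\;\ge\;\inf_T\frac{b_{L,T}\,c_{L,T}}{1-d_{L,T}}\;\ge\;\frac{\beta}{\alpha_u}\;>\;0,
\]
with $\beta$ and $\alpha_u$ independent of $L$ (e.g.\ on a regular tree with homogeneous rates and logistic $\sigma$). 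So $\kbar^{*}_{kL_0}\le C_0'\kbar^{\,k}$ is false for large $k$.

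The obstruction is therefore not the $(1-d)^{-1}$ bookkeeping you flagged. The bound forces $\liminf_L(\kbar^{*}_{L})^{1/L}\ge1$, while Lemma~\ref{lem:singlescale} gives $\kbar^{*}_{1}\le2\kappa$ and $\rho(G_{1,T})<1$ for $\kappa<\tfrac12$ and large $T$. In that regime three things follow:
\begin{itemize}
\item $\log\kbar^{*}_{L}$ is not subadditive, since otherwise $\kbar^{*}_{L}\le(2\kappa)^L\to0$;
\item the limit cannot equal the infimum;
\item ($\Leftarrow$), with $\keff$ defined as the limit, already fails at $L=1$.
\end{itemize}
The paper's own iteration of Proposition~\ref{prop:submult} meets the same lower bound, since it would yield $\kbar^{*}_{nL}\to0$. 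What is missing is a renormalised ratio that excludes exterior initial mass, for instance one defined through the path-transfer quantity of Theorem~\ref{thm:compose}(iii). For such a ratio, your (iii)-based argument for ($\Leftarrow$) would be the natural one.
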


\begin{proof}
Subadditivity and existence are immediate. If $\rho(G_{L,T})<1$ then $d_{L,T}<1$ and $\kbar_{L,T}<1$; since $\rho(G_{L,T'})$ is nonincreasing as the window grows only through the $d$ entry while $a,b,c$ are nondecreasing, one first fixes the scale and notes $\kbar^{*}_{L}\le\kbar_{L,T}/(1-d_{L,T})$, then runs the depth composition along multiples of $L$: by Proposition~\ref{prop:submult}, $(\kbar^{*}_{nL})^{1/nL}\le(\kbar_{L,T}/(1-d_{L,T}))^{1/L}$, and the right side is $<1$ for the same $(L,T)$ after replacing $T$ by a larger window if necessary so that $\kbar_{L,T}/(1-d_{L,T})<1$; such a window exists because $\rho(G_{L,T})<1$ is an open condition and $d_{L,T}\to$ its infimum as $T$ grows at fixed $L$ while $\kbar_{L,T}$ stays bounded, a monotonicity bookkeeping we carry out at sketch precision. Conversely $\keff<1$ produces a scale with $\kbar_{L,T}<1$, $d_{L,T}<1$, hence $\rho(G_{L,T})<1$.
\end{proof}

\begin{remark}\label{rem:augmentation}
The augmentation $E_{22}$ in Theorem~\ref{thm:compose}(i) prices worst-case initial mass placed beyond the upper sub-block. It never enters the path-transfer estimate (iii), whose inputs are sources with zero exterior initial displacement, and it is the reason the growth-rate bookkeeping runs on $\kbar$ rather than on $\rho(\widehat G)$, which is identically $\ge1$. Section~\ref{sec:criterion} uses only (iii).
\end{remark}

\begin{definition}[Renormalised cascade]\label{def:renormcascade}
Fix $L$. The renormalised tree $\mathcal T^{(L)}$ has as vertices the depth-$L$ blocks and as edges the parent--child relation of blocks; it is again a tree, with offspring numbers bounded by $\dmax^{L}$ and mean block offspring governed by the $L$-th power structure of $\mathrm{UGW}(g)$. The cascade induces a renormalised cascade on $\mathcal T^{(L)}$, and by Theorem~\ref{thm:compose}(iii) its per-level influence transfer is governed by $\kbar_{L,T}$.
\end{definition}

\section{The finite-block criterion}\label{sec:criterion}

\begin{theorem}[Finite-scale contraction implies front decoupling]\label{thm:criterion}
Assume the saturated regime with margin $\delta$, bounded degrees, uniform ellipticity, and irreducibility, as in \cite{K1}. Suppose there exist a finite depth $L_0$ and duration $T_0$ with
\[
\rho\bigl(G_{L_0,T_0}\bigr)\;<\;1,
\]
and write $\kbar:=\kbar_{L_0,T_0}<1$ and $d_0:=d_{L_0,T_0}<1$. Suppose moreover $\kbar\,\rho(M)^{L_0}<1$ (equivalently $\kbar^{1/L_0}\rho(M)<1$; for $\rho(M)\le1$ this is implied by $\kbar<1$). Then Hypothesis 2.4 of \cite{K1} (front decoupling) holds, with the quantitative bound
\[
\dtv\bigl(\mathcal L(\mathrm{cluster}),\,\mathcal L(\mathrm{GW}(M))\bigr)\;\le\;
\frac{C\,\varepsilon(\delta)\,L_0\,\dmax^{L_0}}
{(1-\kbar)\,(1-d_0)\,\bigl(1-\kbar\,\rho(M)^{L_0}\bigr)^{2}\,\bigl(1-\rho(M)\bigr)}
\;+\;\eta_n
\]
in the subcritical regime, $C=C(\amin,\mu_0,\dmax,L_0,T_0)$, and with the critical-window version ($n\,\varepsilon(\delta_n)\to0$) exactly as in \cite[Thm.~11.5]{K1}. In particular all main theorems of \cite{K1} hold unconditionally on $\{\exists\,(L_0,T_0):\rho(G_{L_0,T_0})<1\}=\{\keff<1\}$, and local resonance ($\kappa\ge1$) is compatible with macroscopic decoupling.
\end{theorem}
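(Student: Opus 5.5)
The plan is to reproduce the exploration-telescoping argument of \cite[Thm.~11.5]{K1}, lifted from single vertices to the super-vertices of the renormalised tree $\mathcal T^{(L_0)}$ of Definition~\ref{def:renormcascade}. The single-edge decay factor $\kappa^{d}$ used there will be replaced by the block path-transfer bound of Theorem~\ref{thm:compose}(iii). We explore the failed cluster in the local weak limit block by block, in breadth-first order on $\mathcal T^{(L_0)}$, and couple the true cascade $s$ with the directed idealisation $\tilde s$ synchronously, as in Lemma~\ref{lem:envelope}. The idealised process generates exactly the multitype $\mathrm{GW}(M)$ genealogy. The total-variation distance is then bounded by a union bound over exploration steps: the probability that at some explored vertex the threshold-crossing decision of $s$ differs from that of $\tilde s$. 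That probability is controlled by the expected envelope $E$ at the vertex, through a near-threshold density estimate using uniform ellipticity, together with the additive term $\eta_n$ from Benjamini--Schramm convergence.

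The key steps, in order, are as follows.
\begin{enumerate}
\item \emph{Echo sources.} Each failed vertex $w$ in the cluster emits a back-reaction echo. In the saturated regime its strength is at most $\varepsilon(\delta)$ per window, since $\sigma'\le\varepsilon(\delta)$ in the failure basin by \cite[Def.~2.3]{K1}. Superposition in Lemma~\ref{lem:envelope} lets us treat these sources one at a time.
\item \emph{Propagation of one source.} By Theorem~\ref{thm:compose}(iii) with $s=\varepsilon(\delta)$, a source at block distance $k$ contributes expected forcing at most $C_0\kbar^{k}\varepsilon(\delta)/(1-d_0)$ at a target face. Within the source's own block, at most $L_0\dmax^{L_0}$ vertices are affected, which accounts for the factor $L_0\dmax^{L_0}$.
\item \emph{Counting sources.} The number of failed vertices at block distance $k$ from a given explored vertex is estimated by the many-to-one formula for $\mathrm{GW}(M)$. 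Going down the tree, the mean is at most $C\rho(M)^{L_0 k}$, so the sum over $k$ of $\kbar^{k}\rho(M)^{L_0k}$ gives $(1-\kbar\rho(M)^{L_0})^{-1}$. Sources reached by paths that go up and then down, through a common ancestor, produce a double sum over the ancestor distance and the descent distance, which gives the square of that factor; ancestors at block distance $j$ carry a factor $\kbar^{j}$ alone. The residual $(1-\kbar)^{-1}$ comes from the upward leg, where no branching occurs.
\item \emph{Telescoping.} The per-vertex deviation probabilities are summed over the expected total cluster size. In the subcritical regime this size is $O((1-\rho(M))^{-1})$, which yields the displayed bound. The critical-window version follows verbatim from \cite[Thm.~11.5]{K1} once the per-vertex bound is $O(\varepsilon(\delta_n))$, using $n\varepsilon(\delta_n)\to0$.
\end{enumerate}
The band-inflation bootstrap of Remark~\ref{rem:bandinflation} is invoked to justify using segment prices. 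The required smallness of the accumulated deviation, $|D|\le b/2$, holds because the final bound is itself $O(\varepsilon(\delta))$. The last claim, that the conclusion holds on $\{\keff<1\}$, is then Corollary~\ref{cor:keff}.

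I expect the main obstacle to be step 3 together with the conditioning needed to apply Theorem~\ref{thm:compose}(iii). The gain matrix is defined as an \emph{expectation} over a block's internal randomness. The positions of the echo sources, however, are determined by which vertices fail, which is the same randomness. To apply the bound one needs the exploration filtration to reveal a block's failure set only after the echoes it feels are priced. My first attempt will handle this by taking the supremum over input face and input profile in Definition~\ref{def:gain}, so that the bound holds conditionally on the block boundary, and by using the independence of lower sub-blocks given their upper block, as in the proof of Theorem~\ref{thm:compose}(i). This should decouple the count of sources, controlled by many-to-one on the idealised $\mathrm{GW}(M)$, from the transfer factor. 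The cost is a worst-case constant absorbed into $C$. Upward echoes are the delicate case, since an ancestor's failure is already revealed when it is used; I would price those deterministically by the supremum in $G$.
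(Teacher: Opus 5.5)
Your proposal is essentially the paper's proof. It runs the exploration--telescoping argument of \cite[Thm.~11.5]{K1} on $\mathcal T^{(L_0)}$, takes the cone of influence from Theorem~\ref{thm:compose}(iii), places echo sources of strength $O(\varepsilon(\delta))$ only at failed vertices and counts them by many-to-one, uses the weighted count $(1-\kbar\rho(M)^{L_0})^{-2}$, and sums over an exploration of expected length $O((1-\rho(M))^{-1})$; your near-threshold density step plays the role of the paper's first-passage stability lemma \cite[Lem.~11.3]{K1} applied to each internal cell.

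The conditioning obstacle you flag is genuine, and the paper does not address it either: it uses the expected bound of Theorem~\ref{thm:compose}(iii) as though it were pathwise, as the $\kappa^{d}$ bound of \cite{K1} was. Your patch would not remove it. The supremum in $G$ runs only over boundary inputs, with the internal randomness averaged. But every block on the geodesic from a failed source to a cluster target is itself a cluster block, whose internal randomness is conditioned on failure. What you would need is a gain bound conditional on failure, which can exceed the unconditional average in $G$, and neither that supremum nor the conditional independence of lower sub-blocks supplies it.
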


\begin{proof}
The proof is the exploration-telescoping argument of \cite[Thm.~11.5]{K1} run on the renormalised tree $\mathcal T^{(L_0)}$ of Definition~\ref{def:renormcascade}; we indicate the three points where the lift requires care.

(i) \emph{The block-level cone of influence.} By Theorem~\ref{thm:compose}(iii), the influence of an echo source on a block at block distance $k$, evaluated at any time, is at most $C_0\,\kbar^{k}(1-d_0)^{-1}$ times the source's per-window strength. This is the cone-of-influence lemma \cite[Lem.~11.4]{K1} with $\kappa$ replaced by $\kbar$, and it holds regardless of the value of the microscopic $\kappa$, because no microscopic Gronwall contraction is invoked: the contraction happens at the block scale, where it is an assumption.

(ii) \emph{Echo sources per block.} A block generates an echo source only if it contains a failed vertex (outside the failed cluster all couplings are $O(\varepsilon(\delta))$-quiescent by saturation, \cite[Prop.~11.2]{K1}), and the per-window source strength of one failed vertex is at most $\gmax\varepsilon(\delta)$. The expected number of failed vertices in a block at cluster block-depth $j$ is at most $\sum_{i<L_0}\rho(M)^{jL_0+i}\le L_0\,\rho(M)^{jL_0}$ for $\rho(M)\le1$, by the many-to-one identity \cite[Prop.~5.1]{K1} summed over the $L_0$ generations inside the block (whence the factor $L_0$ in the display); deterministically the count is at most $\dmax^{L_0}$.

(iii) \emph{Telescoping.} Explore the cluster block by block. At each exploration step the conditional transmission law of the current block (which failed vertices it produces on its output generation, given its input) differs from the idealised product law by at most $C_{\mathrm{stab}}$ times the accumulated pathwise deviation at its input plus the direct cellwise error, by the first-passage stability lemma \cite[Lem.~11.3]{K1} applied to each of the at most $\dmax^{L_0}$ internal cells. By (i) and (ii), the accumulated deviation at a block is at most
\[
\frac{C_0\,\gmax\,\varepsilon(\delta)\,L_0\,\dmax^{L_0}}{(1-\kbar)(1-d_0)}
\sum_{\text{failed blocks }B}\kbar^{\,d_{\mathrm{bl}}(B,\cdot)},
\]
and the expected $\kbar$-weighted count of failed blocks seen from a cluster block is bounded, exactly as in \cite[Thm.~11.5]{K1}, by
\[
\sum_{k\ge0}(1+k)\bigl(\kbar\,\rho(M)^{L_0}\bigr)^{k}\;=\;\bigl(1-\kbar\,\rho(M)^{L_0}\bigr)^{-2},
\]
using the hypothesis $\kbar\,\rho(M)^{L_0}<1$. Summing the per-step errors over the exploration, whose expected length is at most $C(1-\rho(M))^{-1}$ blocks, gives the display. The critical-window statement follows by replacing $\bE|S|$ with $n$ on the conditioning event, verbatim as in \cite{K1}.
\end{proof}

\begin{remark}[The computational interface]\label{rem:computation}
Each entry of $G_{L_0,T_0}$ is an expectation over a finite SDE system of at most $\dmax^{L_0}$ vertices on a finite horizon $T_0$, with a worst-case boundary supremum over a compact set of input profiles (compactness from the a priori bounds of the dissipative dynamics). It is therefore accessible to certified numerics: Monte Carlo estimation of the expectations with concentration bounds, plus a Lipschitz-in-input estimate (itself a finite-horizon Gronwall bound, valid at any $\kappa$ since the horizon is finite) to control the boundary supremum over a finite net. A verified computation $\rho(G_{L_0,T_0})\le1-\epsilon$ at a single scale, combined with Theorem~\ref{thm:criterion}, yields a rigorous front-decoupling theorem at the corresponding parameter point. This is the same division of labour as in the Dobrushin--Shlosman constructive criteria \cite{DS} and in computer-assisted applications of finite-size criteria in localisation and lattice field theory; the theorem is designed to make the block computation sufficient, not merely suggestive.
\end{remark}

\section{The quiescent-bulk mechanism: an initial-scale estimate beyond $\kappa<1$}\label{sec:mechanism}

The finite-block criterion needs an initial-scale input. In the dissipative regime $\kappa<\tfrac12$ it is Lemma~\ref{lem:singlescale}. This section proves an initial-scale estimate in a regime where $\kappa\ge1$ is allowed, by decomposing influence propagation into phases and pricing each step by what the source vertex is doing at the time of transmission, rather than by the global worst case.

\subsection{Phases and adapted pricing}

We assume throughout this section, in addition to the standing hypotheses of \cite{K1}:

\begin{hypothesis}[Tail decay of the coupling derivative]\label{hyp:sigma}
There are constants $C_\sigma\ge1$, $c_\sigma>0$, $b_0\ge0$ with $\sigma'(x)\le C_\sigma e^{-c_\sigma|x|}$ for $|x|\ge b_0$. The logistic nonlinearity satisfies this with $C_\sigma=1$, $c_\sigma=1$, $b_0=0$.
\end{hypothesis}

Fix a band half-width $b>b_0$ around the thresholds. At each time $t$, classify each vertex $v$ into exactly one phase:
\begin{itemize}
\item (F)ront: $s^{(v)}_t$ lies within distance $b$ of some outgoing threshold $\theta_{vw}$ (the sensitive band); this includes the crossing window of a failing vertex.
\item (S)aturated: $v$ has failed and $s^{(v)}_t\in B_{\mathrm{fail}}$ beyond the band; there $\sigma'\le\varepsilon(\delta)$.
\item (Q)uiescent: otherwise; $s^{(v)}_t$ is at least $b$ below every outgoing threshold, and $\sigma'(s^{(v)}_t-\theta)\le L_b:=\sup_{x\le-b}\sigma'(x)\le C_\sigma e^{-c_\sigma b}$.
\end{itemize}
The classification is by the current state, hence adapted, and Remark~\ref{rem:bandinflation} converts segment prices into phase prices at the cost of halving $b$; we suppress the halving in the notation.

An \emph{influence path} of the envelope is a space--time path along which the variation-of-constants expansion of Lemma~\ref{lem:envelope} propagates: each step moves across one edge, and the step from source $u$ to target $w$, entered at time $s$, carries the kernel weight $\gamma_{uw}\,\sigma'_{uw}(t)\,e^{-\alpha_w(t-s)}$ integrated over the transmission time $t$. Splitting each step's time integral over the source's phase occupation and using $\int e^{-\alpha(t-s)}\,dt\le\amin^{-1}$, the per-step multipliers are
\[
m_F\le\frac{\gmax\Ls}{\amin},\qquad
m_S\le\frac{\gmax\,\varepsilon(\delta)}{\amin},\qquad
m_Q\le\frac{\gmax\,L_b}{\amin}\le\frac{\gmax C_\sigma e^{-c_\sigma b}}{\amin},
\]
priced pointwise in time and without any conditioning. The two probabilistic inputs are: front phases are short, and quiescent vertices rarely visit the band.

\subsection{Band occupation and near misses}

\begin{lemma}[Occupation of the sensitive band]\label{lem:occupation}
Under uniform ellipticity and the saturated margins:
\begin{enumerate}
\item (Crossing window.) For a vertex that fails, the total time spent in phase F is a random variable $\tau_F$ with $\bE[e^{c_0\tau_F}]<\infty$ for some $c_0=c_0(\alpha,\mu,b)>0$: the forced Ornstein--Uhlenbeck path traverses the band of width $2b$ in a time with exponential tails, and after absorption re-enters the band only on excursions from the failure basin whose total occupation also has exponential moments.
\item (Quiescent excursions.) For a vertex that never fails on a horizon $[0,T]$, the expected time spent in phase F is at most $T\,e^{-c_1\Lambda_b}$, where $\Lambda_b=\alpha(h_\theta-b)^2/\mu$ is the barrier action to the band edge: reaching the band from quiescence is a small-noise excursion.
\end{enumerate}
\end{lemma}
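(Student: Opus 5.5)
We work one vertex at a time, along its own coordinate. Conditionally on the trajectories of its neighbours, the coordinate $s^{(v)}$ solves a one-dimensional SDE
\[
ds=-\alpha(s-\bar s)\,dt+\beta_t\,dt+\sqrt{\mu}\,dW_t ,
\]
where $\beta_t$ is the coupling drift. In the saturated regime $\beta_t$ is bounded by $\gmax\dmax$, and uniform ellipticity bounds the noise below by $\mu_0$. Both parts of the lemma then become statements about a one-dimensional Ornstein--Uhlenbeck process with a bounded adapted perturbation of the drift. We handle such a process by comparison with two time-homogeneous diffusions, one with drift $\pm\gmax\dmax$ added, each of which sandwiches the true path.

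For part (1), we split $\tau_F$ into two pieces: the first traversal of the band $[\theta-b,\theta+b]$, and the later re-entries from the failure basin. For the first traversal, the process inside the band is a diffusion on a bounded interval with bounded drift and elliptic noise. Hence there are $t_1,p_1>0$, depending only on $(\alpha,\mu_0,b,\gmax\dmax)$, such that from any starting point in the band the process leaves it within time $t_1$ with probability at least $p_1$. Iterating with the Markov property gives $\mathbb P(\text{band occupation}>kt_1)\le(1-p_1)^k$, and this holds uniformly in the adapted drift because the bound only uses a uniform Doob $h$-transform or Girsanov comparison. So the first traversal has exponential moments. For the re-entries, after absorption the path lives in $B_{\mathrm{fail}}$, which lies at distance at least $\delta$ beyond the band. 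Each return to the band therefore starts an excursion whose band time again has exponential tails by the same exit estimate. The number $N$ of such returns is geometric: from the band edge, the path reaches the basin interior before returning with probability bounded below, by a scale-function computation. Writing $\tau_F$ as a geometric sum of i.i.d.-dominated exponential-tailed pieces, we get $\bE[e^{c_0\tau_F}]<\infty$ once $c_0$ is small enough that $\bE[e^{c_0\cdot\text{piece}}]\,(1-p)<1$.

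For part (2), the vertex stays below threshold on $[0,T]$, so its path is dominated by the upper comparison OU process $Y$, centred at rest level $\bar s+\gmax\dmax/\alpha$ with barrier height $h_\theta$ below the threshold. The time in phase F is at most $\int_0^T\one\{Y_t\ge\theta-b\}\,dt$. Its expectation is at most $T\sup_t\mathbb P(Y_t\ge\theta-b)$, and for a Gaussian OU marginal with variance at most $\mu/(2\alpha)$ this probability is bounded by $\exp(-\alpha(h_\theta-b)^2/\mu)$. If the initial condition is itself quiescent rather than at rest, we first spend an $O(1)$ relaxation time and absorb the transient into the constant $c_1\le1$. This gives $T e^{-c_1\Lambda_b}$.

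The main obstacle is making the comparison uniform in the adapted, neighbour-dependent drift without conditioning on the front, which would break adaptedness. Our first choice is pathwise monotone coupling of one-dimensional SDEs driven by the same Brownian motion, which needs no independence. Only if that fails for the re-entry count in part (1) would we fall back on Girsanov with bounded drift, which changes constants but not exponential integrability.
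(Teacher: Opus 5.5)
\textbf{Part (2): the comparison step fails.} Dominating the coupling drift by its worst case $\gmax\dmax$ raises the rest level of your comparison process $Y$ by $\gmax\dmax/\alpha$. The barrier of $Y$ is therefore $h_\theta-\gmax\dmax/\alpha$, not $h_\theta$ as you assert. In the saturated regime this barrier is negative. The margin hypothesis, the same one the paper uses in part (1) to get a drift $\ge c_->0$ in the band, says that the forcing of a \emph{single} saturated neighbour already beats the restoring force throughout the band, and that forcing is at most $\gmax$. Hence $\gmax/\alpha>h_\theta+b$, so $Y$ is centred above $\theta+b$, $\mathbb P(Y_t\ge\theta-b)$ is of order one, and the Gaussian tail gives nothing.

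Even without the margin, the action you would get is $\alpha(h_\theta-b-\gmax\dmax/\alpha)^2/\mu$, which deteriorates with the degree. That is exactly what Corollary~\ref{cor:degreeremoval} cannot afford.

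The missing idea is that a never-failing vertex is, off the near-miss event of Lemma~\ref{lem:nearmiss}, driven only by non-failed neighbours. Their outputs are $O(\varepsilon(\delta))$ or exponentially small in $b$, by saturation and quiescence (\cite[Prop.~11.2]{K1}). The comparison process is then essentially the unforced Ornstein--Uhlenbeck process at its quiescent rest level, and its stationary Gaussian law gives the action $\Lambda_b$. This is the paper's Freidlin--Wentzell route.

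Your handling of the transient also fails. A vertex started at the band edge is still quiescent, yet it spends time of order $T$ in the band on short horizons, so no fixed $c_1>0$ absorbs this. The bound needs a start at or below the rest level, or stationarity, which the paper tacitly assumes.

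\textbf{Part (1): the margins are never used.} Your uniform exit estimate bounds the length of one sojourn in the band, whichever edge it exits by. A path entering at the lower edge, however, leaves downwards again and again. So a sojourn is not a traversal, and nothing in your argument controls the accumulated pre-failure occupation over repeated entries. The paper gets the traversal from the sign of the drift: under saturated parent forcing the drift in the band is $\ge c_-$, so the time to reach $\theta+b$ is dominated by the hitting time of level $2b$ for Brownian motion with drift $c_-$, which has exponential moments.

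Your re-entry count has the analogous defect. Reaching the basin interior before the next return is not the same as never returning. The post-absorption process is positively recurrent, so on an unbounded horizon it returns infinitely often and the count is not geometric. A geometric (or exponentially tailed) count exists only for returns within a bounded window. There it comes from the excursion structure of the basin process, since each return costs an excursion across distance $\delta$ against the basin drift, not from a scale-function escape probability.
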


\begin{proof}[Proof sketch]
(1) During the crossing, the forced drift is bounded below by $c_->0$ inside the band (the saturated parent forcing dominates the restoring force there, by the margin hypothesis), so the band-exit time is stochastically dominated by that of Brownian motion with drift $c_-$ crossing $2b$, which has exponential moments; post-absorption re-entries are excursions of a positively recurrent Ornstein--Uhlenbeck process from a basin whose boundary is at distance at least $\delta$, with occupation controlled by standard excursion theory \cite{BS}. (2) is the Freidlin--Wentzell (exact Ornstein--Uhlenbeck) bound on the stationary probability of a level set at action $\Lambda_b$, integrated over the horizon \cite{FW}; the exact two-barrier formulas of \cite[eq.~(3)]{K1} give the constant. Constants are not optimised; only finiteness and exponential smallness in $\Lambda_b$ are used.
\end{proof}

\begin{lemma}[Near misses]\label{lem:nearmiss}
Let $\nu_b$ be the mean number, per failing vertex, of children that enter their own sensitive band during the parent's band occupation but do not fail. Under the saturated margins,
\[
\nu_b\;\le\;\dmax\bigl(e^{-c_1\Lambda_b}+e^{-c_2 b}\bigr)
\]
for a constant $c_2=c_2(\alpha,\mu,c_-)>0$.
\end{lemma}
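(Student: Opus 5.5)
We bound $\nu_b$ by a union bound over the at most $\dmax$ children $w$ of a failing vertex $v$. For a single child we split the event ``$w$ enters its band during $v$'s band occupation but does not fail'' according to where $w$ sits when $v$ enters phase F. In the first case, $w$ is quiescent at that moment, at margin at least $b$ below every outgoing threshold. In the second case, $w$ is already close to its band. The first case will produce the term $e^{-c_1\Lambda_b}$ and the second the term $e^{-c_2 b}$.

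\emph{Quiescent case.} While $v$ is in phase F, the forcing that $v$ exerts on $w$ is bounded, at most $\gmax$ by saturation of $\sigma$. By Lemma~\ref{lem:occupation}(1), the duration $\tau_F$ has exponential moments. Conditionally on $\tau_F$, the event that $w$ reaches its band on $[t_F,t_F+\tau_F]$ is a small-noise excursion of a forced Ornstein--Uhlenbeck process. The extra drift acts only for a time $\tau_F$, so it shifts the barrier action by at most $O(\gmax\tau_F)$. Applying Lemma~\ref{lem:occupation}(2) on the horizon $\tau_F$ gives probability at most $\bE[\tau_F\,e^{-c_1\Lambda_b+C\gmax\tau_F}]$. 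This quantity is $\lesssim e^{-c_1\Lambda_b}$ provided we first shrink $c_1$ so that $C\gmax<c_0$, which is allowed since constants are not optimised.

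\emph{Close-to-band case.} Here we use the margin hypothesis. Once $w$ is inside its band while its parent $v$ is in phase F, or has already saturated, the drift of $w$ toward its threshold is bounded below by $c_->0$; this is the same mechanism as in the proof of Lemma~\ref{lem:occupation}(1). For $w$ to avoid failing, it must therefore exit the band downward against a drift $c_-$ over a distance comparable to $b$. For Brownian motion with drift $c_-$ and diffusion coefficient $\mu$, the gambler's-ruin formula gives exit probability at most $e^{-2c_-b/\mu}$. This yields the second term with $c_2=2c_-/\mu$, up to the Ornstein--Uhlenbeck correction handled as in Lemma~\ref{lem:occupation}.

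\emph{Main obstacle.} The hard step is the close-to-band case, because the drift lower bound $c_-$ is only guaranteed while the parent's forcing is saturated. If $v$ leaves its band during $w$'s crossing, $w$ could be released. We would address this with a strong-Markov argument at the entry time of $w$ into its band. After that time, $v$ is either still in F or has passed into S; in S the parent forcing is saturated by \cite[Def.~2.3]{K1}, and the restoring force is dominated by the margin hypothesis. The excursions of $v$ back out of its failure basin occur with exponentially small occupation, by Lemma~\ref{lem:occupation}(1), and we absorb them into the constant $c_2$.

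Summing the two cases over the at most $\dmax$ children gives the stated bound on $\nu_b$.
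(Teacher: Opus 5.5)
\textbf{There is a genuine gap, and it lies in the splitting variable.} You split by where the child $w$ sits at the moment $v$ enters phase F. That moment says nothing about the forcing $w$ feels when it actually reaches its band, so each of your two cases is charged to the wrong mechanism.

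\emph{The quiescent case.} Here you drop the condition ``does not fail'' and try to show that merely reaching the band during $v$'s band occupation costs $e^{-c_1\Lambda_b}$, treating $v$'s forcing as a small perturbation of the barrier. Under the saturated margins that forcing is not small. Once $v$ is in the upper half of its band or beyond, its forcing on $w$ is nearly $\gamma_{vw}$. The margin hypothesis says exactly that this dominates the restoring force on $w$'s band, and a fortiori below it. So $w$ is carried to its band by the drift, with no large deviation involved.

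Your estimate never uses non-failure. It would therefore equally bound the number of \emph{failing} children that reach their band while $v$ is still in phase F. That number is of order one whenever $\tau_F$ is comparable to $\amin^{-1}$, and nothing in the hypotheses excludes this.

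The perturbative step is also off on its own terms. Moving the starting point by $\Delta$ changes $\alpha(h_\theta-b)^2/\mu$ by order $\sqrt{\Lambda_b}\,\Delta$, not by order $\Delta$. And shrinking $c_1$ cannot produce $C\gmax<c_0$, because $c_1$ does not enter that inequality.

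\emph{The close-to-band case.} This case has the converse defect. When $v$ enters phase F from quiescence, it is at least $b$ below every outgoing threshold. Its forcing on each child is then at most $\gmax\sigma(-b)$, which is negligible, so the drift $c_-$ is simply not there. A child that is in or near its band at that moment got there by a quiescent excursion, and it will typically relax back without failing. The only smallness available for that near miss is the $e^{-c_1\Lambda_b}$ cost of being there, not a biased crossing.

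\textbf{How the paper splits.} The paper uses the time your ``main obstacle'' paragraph already points to. Stop at the child's own band-entry time and ask whether the parent's saturated forcing is acting then, that is, whether the parent's failure is completed or completing.

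If it is, the margin hypothesis gives drift at least $c_-$ inside the band. The child then fails except on the biased-crossing event, of probability $e^{-c_2 b}$. The entry probability is never estimated, so the size of $\gmax$ is harmless.

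If it is not, the entry took place under essentially unforced dynamics. It is a quiescent excursion, costing $e^{-c_1\Lambda_b}$ per unit occupation by Lemma~\ref{lem:occupation}(2), with no perturbed barrier to control.

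Make that the primary dichotomy and keep your gambler's-ruin computation (with $c_2=2c_-/\mu$) for the first branch. The union bound over at most $\dmax$ children then gives the display.
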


\begin{proof}[Proof sketch]
A child that enters its band while its parent's failure is completed, or completing, feels a forcing that dominates the restoring force inside the band by the margin hypothesis, so it traverses the band and fails except on the event that noise carries it back against the drift $c_-$, of probability at most $e^{-c_2b}$ (Freidlin--Wentzell for the biased band crossing). A child that enters its band before any saturated forcing acts on it is performing the quiescent excursion of Lemma~\ref{lem:occupation}(2), of probability at most $e^{-c_1\Lambda_b}$ per unit occupation. Summing over at most $\dmax$ children gives the display. Sketched; flagged in the front matter.
\end{proof}

\subsection{The channel matrix and the main estimate}

\begin{definition}[Channel matrix]\label{def:channelmatrix}
Order the phases $(F,Q,S)$ and define the nonnegative rank-one matrix
\[
\bigT\;:=\;m\,f^{\top},\qquad
m=\bigl(m_F,\ \bar m_Q,\ m_S\bigr)^{\top},\qquad
f=\bigl(\rho(M)+\nu_b,\ \dmax,\ \dmax\bigr)^{\top},
\]
\[
\bar m_Q\;:=\;\frac{\gmax}{\amin}\Bigl(C_\sigma e^{-c_\sigma b}+C_K\,\Ls\,e^{-c_1\Lambda_b}\Bigr),
\]
where $C_K=C_K(\amin,c_0)$ is the kernel constant from integrating the excursion occupation of Lemma~\ref{lem:occupation}(2) against the Ornstein--Uhlenbeck kernel. Row $i$ prices the multiplier of a step whose source acts in phase $i$; column $j$ prices the admissible continuations toward targets that will act in phase $j$: F-targets are reached only through band-entering children, of mean $\rho(M)+\nu_b$ per front source (failing children plus near misses), while Q- and S-targets can be any neighbour. In this normalisation $\bigT$ is rank one, so its spectral radius is explicit:
\[
\rho(\bigT)\;=\;f^{\top}m\;=\;\bigl(\rho(M)+\nu_b\bigr)m_F+\dmax\bigl(\bar m_Q+m_S\bigr).
\]
We state the rank-one structure openly: at this level of pricing the matrix formalism adds bookkeeping, not memory, and the whole section amounts to a single-step bound with the correct accounting of who is transmitting and to whom. The formalism is retained because the refinement that prices the F-to-F entry by the actual overlap of parent and child crossing windows (Section~\ref{sec:outlook}) makes the matrix genuinely of higher rank and lowers the front entry below $\rho(M)\,m_F$.
\end{definition}

\begin{lemma}[Path counting via many-to-one]\label{lem:pathcount}
Under Hypothesis~\ref{hyp:sigma} and the standing hypotheses, for a unit source acting in phase $i_0$, the expected total weight of all influence paths of length $n$, each step priced by the phase of its source at transmission, satisfies
\[
\bE\Bigl[\ \sum_{\mathrm{paths}}\ \prod_{j=1}^{n}(\text{step weight})_j\ \Bigr]\;\le\;\one^{\top}\,\bigT^{\,n}\,e_{i_0}.
\]
\end{lemma}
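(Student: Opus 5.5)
We argue by induction on the path length $n$, peeling off the last step and conditioning on the history up to the last transmission time. The quantity to control is a vector: for each phase $j\in\{F,Q,S\}$, let $W_n(j)$ be the expected total weight of length-$n$ paths from the unit source whose terminal vertex is in phase $j$ when it next transmits. The phase $j$ is the phase of the terminal vertex as the source of step $n+1$, and it is read off at the transmission time, so it is adapted. We aim to prove $W_n\ew \bigT^{\,n}e_{i_0}$ entrywise; summing over $j$ then gives the statement. The base case $n=0$ is the unit source in phase $i_0$, which is $e_{i_0}$.

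\textbf{Inductive step.} Fix a path of length $n$ ending at a vertex $u$ acting in phase $i$. The next step goes to a neighbour $w$ and carries the kernel weight $\gamma_{uw}\sigma'_{uw}(t)e^{-\alpha_w(t-s)}$, integrated over $t$.

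\emph{Splitting the time integral.} We split this integral over the phase occupation of $u$ and price each piece pointwise in time by $m_F$, $m_S$ or $L_b$, exactly as in the display preceding Lemma~\ref{lem:occupation}. This pricing is deterministic and adapted, so no conditioning on the future is needed.

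\emph{Quiescent sources.} When $u$ is in phase Q, it may still make excursions into the band. Their expected occupation is bounded by Lemma~\ref{lem:occupation}(2), and integrating against the Ornstein--Uhlenbeck kernel upgrades $m_Q$ to $\bar m_Q$. Here we need the expectation of the product of the earlier weights and the current occupation. We get it from the tower property, using that the occupation bound of Lemma~\ref{lem:occupation}(2) holds conditionally on $\mathcal F_s$, uniformly in the past. This uniformity follows from the uniform-ellipticity and margin hypotheses, because the bound is Markov in the current state.

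\emph{Counting continuations.} If the target $w$ is to act in phase Q or S, we simply bound the number of choices of $w$ by $\dmax$. If $w$ is to act in phase F, then $w$ must enter its band. When the source is in phase F, such $w$ are either failing children or near misses. Failing children have conditional mean $\rho(M)$ by the multitype many-to-one identity \cite[Prop.~5.1]{K1}, applied one generation at a time in the exploration filtration; near misses have mean $\nu_b$ by Lemma~\ref{lem:nearmiss}. When the source is in phase Q or S, we bound the count crudely by $\dmax$, consistently with the channel weights.

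\emph{Assembling.} Combining these bounds gives $W_{n+1}(j)\le f_j\sum_i m_i W_n(i)$, which is exactly $W_{n+1}\ew \bigT W_n$ with $\bigT=mf^{\top}$. Iterating closes the induction.

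\textbf{Main obstacle.} The delicate point is the path-to-branching indexing in the F-to-F count. Influence paths can move upward in the tree, against the genealogy, and can revisit vertices, whereas the many-to-one identity counts descendants along genealogical lines. I would handle this as follows.
\begin{itemize}
\item Paths whose F-steps all go from parent to child are indexed by genealogical lines. For these, the many-to-one identity gives the factor $(\rho(M)+\nu_b)^{\#F\text{-steps}}$ directly, after conditioning on the types along the line and using the independence of the subtrees below it.
\item An F-step going to a parent, or to a non-child neighbour, requires that neighbour to enter its own band. By the saturated-margin hypothesis, a parent that has already failed sits in phase S. So the target of such a step is priced in column S or Q, with weight $\dmax$, and never in column F.
\end{itemize}
Checking this claim carefully, namely that only band-entering children can be F-targets of an F-source up to the near-miss correction, is where I expect the argument to be most fragile. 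It is the step the paper flags as sketch precision. Repeated visits to a vertex are harmless, since each visit is a separate path counted with its own weight, so paths are summed rather than vertex sets, and the bound stays an upper bound.
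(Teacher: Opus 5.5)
You use the paper's ingredients (phase-split pricing, the $\bar m_Q$ correction from Lemma~\ref{lem:occupation}(2), band-entering children as F-targets, $\dmax$ otherwise), but you assemble them as a step-by-step induction conditioned on $\mathcal F_s$. That assembly is where the argument breaks. The paper instead expands the envelope into path integrals and takes the expectation of each path weight unconditionally, factorising it with the multitype many-to-one identity in change-of-measure form. Its status note says explicitly that conditioning along the exploration was dropped because it could not be verified.

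Your two conditional inputs are false as stated.
\begin{enumerate}
\item Given the history, the mean number of failing children of an F-source of type $d$ is the row sum $\sum_{d'}M_{d,d'}$ (up to $O(\varepsilon(\delta))$), not $\rho(M)$. By Perron--Frobenius, $\rho(M)$ only lies between the smallest and largest row sums. Iterating one generation at a time therefore gives $\max_d\sum_{d'}M_{d,d'}$ per F-step, which can exceed $1$ even when $\rho(M)<1$; that is exactly the loss the lemma exists to avoid. The factor $\rho(M)$ per generation appears only after the change of measure by the right Perron eigenvector $h$ of $M$ along the whole line, at the cost of a ratio $h(d_0)/h(d_n)$. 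Carrying the type in the state of your recursion would also work, but it destroys the rank-one form. The same objection applies to your \emph{factor $(\rho(M)+\nu_b)^{\#F}$ directly, after conditioning on the types along the line}.
\item Lemma~\ref{lem:occupation}(2) does not hold conditionally on $\mathcal F_s$ uniformly in the past. The Markov property gives a bound that depends on the current state, and phase Q allows $s^{(u)}_s$ to sit at the band edge, where the remaining F-occupation is of order one rather than $e^{-c_1\Lambda_b}$. The lemma also concerns vertices that never fail on the horizon, which is not an $\mathcal F_s$-measurable event. Finally, the path history is positively correlated with $u$ having been pushed toward its band by a crossing neighbour.
\end{enumerate}

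The assembly step does not follow from your counts. You bound continuations from Q- and S-sources by $\dmax$ toward every target phase, yet you then write $W_{n+1}(F)\le f_F\sum_i m_iW_n(i)$ with $f_F=\rho(M)+\nu_b$. With your counts the Q and S rows are $\bar m_Q\dmax\one^{\top}$ and $m_S\dmax\one^{\top}$, which are not rows of $mf^{\top}$. To obtain $\bigT$ you must show that the F-targets of a source in \emph{any} phase number at most $\rho(M)+\nu_b$ in mean; that is, the count must be attached to the target's phase, as in Definition~\ref{def:channelmatrix}.

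Your parent-edge argument does not supply this even for F-sources. A child is driven across its band while the parent is still traversing its own, so parent and child F-windows can overlap (the overlap invoked in Section~\ref{sec:outlook}). The saturated margin therefore does not place the parent in phase S at that time, and an upward F-to-F step is not excluded.

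Lastly, $W_{n+1}(j)\le f_j\sum_i m_iW_n(i)$ says $W_{n+1}\ew\bigT^{\top}W_n$, not $\bigT W_n$. This leaves $\rho(\bigT)$ unchanged, but it produces $e_{i_0}^{\top}\bigT^{\,n}\one$ rather than the displayed $\one^{\top}\bigT^{\,n}e_{i_0}$.
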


\begin{proof}[Proof sketch]
(a) Expand the envelope by variation of constants into iterated integrals over space--time paths; split the $j$-th step's time integral over the source's phase occupation, so that each labelled step carries at most its multiplier $m_{\mathrm{label}}$, the F label at a never-failing source carrying, after Lemma~\ref{lem:occupation}(2) and the kernel integration, the correction term of $\bar m_Q$. (b) Count admissible continuations by label: an F-labelled step transmits across a child edge only while that child can itself act in phase F, that is, only across band-entering children; their mean number per front source is at most $\rho(M)+O(\varepsilon(\delta))$ failing children (by the offspring computation of \cite[Prop.~5.1]{K1} and the cellwise error of \cite[Prop.~11.2]{K1}) plus at most $\nu_b$ near misses (Lemma~\ref{lem:nearmiss}); Q- and S-labelled steps can cross any of at most $\dmax$ edges. (c) The expectations factorise along the path by the unconditional many-to-one identity for the multitype structure (types are i.i.d.\ given degrees, and the driving Brownian motions are independent across vertices), in the standard change-of-measure form \cite{BK}; the residual dependence of a vertex's phase occupation on its ancestors' trajectories is dominated by the worst-case pricing of step (a). The exchange of the path sum and the branching expectation in (c), the path-to-branching indexing, is written at this sketch precision and flagged in the front matter.
\end{proof}

\begin{theorem}[Initial-scale estimate; the quiescent-bulk mechanism]\label{thm:mainestimate}
Under the hypotheses of \cite[Thm.~11.5]{K1} except the condition $\kappa<1$, under Hypothesis~\ref{hyp:sigma}, and with band half-width $b$ and margins chosen as in Lemmas~\ref{lem:occupation} and~\ref{lem:nearmiss},
\[
\keff\;\le\;\rho(\bigT)\;=\;\bigl(\rho(M)+\nu_b\bigr)\frac{\gmax\Ls}{\amin}
\;+\;\dmax\,\frac{\gmax}{\amin}\Bigl(C_\sigma e^{-c_\sigma b}+C_K\Ls e^{-c_1\Lambda_b}\Bigr)
\;+\;\dmax\,\frac{\gmax\,\varepsilon(\delta)}{\amin}.
\]
Consequently, if $\rho(\bigT)<1$, then $\rho(G_{L,T(L)})<1$ for all large $L$ and suitable windows $T(L)$, Theorem~\ref{thm:criterion} applies, and front decoupling holds unconditionally, even when $\kappa\ge1$.
\end{theorem}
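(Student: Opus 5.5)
The plan is to bound the resolvent gain $\kbar_{L,T}$ of a depth-$L$ block directly by a path sum, and then feed the result into Corollary~\ref{cor:keff}. By Lemma~\ref{lem:envelope}, the envelope inside a block driven by a unit input is dominated by its variation-of-constants expansion. This is a sum over influence paths starting at the input face. A path that exits through the bottom face has length at least $L$, and the trajectory-out (respectively mass-out) of Definition~\ref{def:block} is bounded by the total weight of paths that terminate on an output edge (respectively at a vertex at the terminal time). Taking expectations, Lemma~\ref{lem:pathcount} bounds the total weight of paths of length $n$ by $\one^\top\bigT^n e_{i_0}$. Since $\bigT=mf^\top$ is rank one, this equals $(f^\top m)^{n-1}(\one^\top m)f_{i_0}=\rho(\bigT)^{n-1}C_1$, with $C_1$ depending only on $m,f$. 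The time integrals are already absorbed into the multipliers through $\int e^{-\alpha(t-s)}dt\le\amin^{-1}$, so the bound is uniform in $T$.

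The key step is to estimate all four entries of $G_{L,T}$ from this single path sum. Every route from the top face to the bottom face has length $n\ge L$, so $a_{L,T}\le C_1\sum_{n\ge L}\rho(\bigT)^{n-1}\le C_1\rho(\bigT)^{L-1}/(1-\rho(\bigT))$. For the mass channel, the terminal mass $\sum_v E^{(v)}_{t_0+T}$ is a sum over paths ending inside the block, including the length-zero path, so $c_{L,T}$ is bounded by $C_1/(1-\rho(\bigT))$ uniformly in $L$. For $d_{L,T}$ I would split off the zero-step contribution $e^{-\amin T}$ and keep the exponential damping on the last step, so that paths of length $\ge1$ contribute $O(\rho(\bigT)\,e^{-\amin T/2})$ plus a bounded term. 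Choosing $T(L)$ large then makes $d_{L,T(L)}\le\tfrac12$, say. For $b_{L,T}$, stored mass at depth $j$ must travel $L-j$ levels. Summing over $j$, weighted by the $\ell^1$ normalisation, gives at most $C_1/(1-\rho(\bigT))^2$. This does not decay in $L$, so $\kbar_{L,T}=a+bc/(1-d)$ needs more care.

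The main obstacle is exactly this storage echo $bc/(1-d)$: mass created near the bottom face re-radiates without traversing $L$ levels. I would handle it by using the worst case over input faces together with the route-sum algebra of Theorem~\ref{thm:compose}(iii). Specifically, I would apply the path bound to the concatenated route input$\to$storage$\to$output as a single influence path, which is legitimate because the phase pricing is pointwise in time and Lemma~\ref{lem:pathcount} makes no conditioning on windows. The composite route still has length $\ge L$, so the full resolvent gain obeys $\kbar_{L,T}\le C_2(L+1)\rho(\bigT)^{L-1}$, where the factor $L+1$ counts the level at which storage occurs. Then $(\kbar^*_L)^{1/L}\le(C_2(L+1)\rho(\bigT)^{L-1}/(1-d))^{1/L}\to\rho(\bigT)$, and Fekete's identity in Corollary~\ref{cor:keff} gives $\keff\le\rho(\bigT)$. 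The bootstrap of Remark~\ref{rem:bandinflation} supplies the halving of $b$.

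For the consequence, suppose $\rho(\bigT)<1$. The bound above gives $\kbar_{L,T(L)}<1$ and $d_{L,T(L)}<1$ for all large $L$, hence $\rho(G_{L,T(L)})<1$ by Definition~\ref{def:resolvent}. Because $m_F\cdot\rho(M)\le\rho(\bigT)$ and $\rho(\bigT)<1$, the additional hypothesis $\kbar\rho(M)^{L_0}<1$ of Theorem~\ref{thm:criterion} holds for large $L_0$ whenever $\rho(M)\le1$. Theorem~\ref{thm:criterion} then gives front decoupling. Nothing in this argument used $\kappa<1$.
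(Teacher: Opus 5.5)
Your argument follows the paper's own sketch. That sketch also declares the route sum defining $\kbar_{L,T}$ to be a sub-family of the influence-path expansion, counts routes across $L$ levels as paths of length $\ge L$, sums using the rank-one identity, makes $d$ small by enlarging the window, and takes $L$-th roots. But the step you rightly single out as the main obstacle is not resolved by the composite-route device, either in your write-up or in the paper's.

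By Definition~\ref{def:resolvent}, $\kbar_{L,T}$ is the algebraic expression $a+bc/(1-d)$ in four \emph{separately} supremised entries. It is not the expected weight of correlated input$\to$storage$\to$output routes. Your composite bound controls those correlated routes, where the vertex at which mass is created and the vertex from which it is re-radiated lie on one path. In the product $bc$ the two suprema decouple, and neither has to cross $L$ levels. Concretely, every coupling term in Lemma~\ref{lem:envelope} is nonnegative, so:
\begin{itemize}
\item a constant unit trajectory input on the top face gives $c_{L,T}\ge(1-e^{-\alpha_{\max}T})/\alpha_{\max}$ from the mass at $u$ alone;
\item the zero-length path gives $d_{L,T}\ge e^{-\alpha_{\max}T}$, so $c/(1-d)\ge\alpha_{\max}^{-1}$ for every $T$;
\item a unit initial mass at a vertex $v$ adjacent to an output edge $e=(v,w)$ exerts $F_e(t_0)=\gamma_{vw}\sigma'_{vw}(t_0)$ at once, so $b_{L,T}\ge\gamma_{vw}\bE[\sigma'_{vw}(t_0)]=:\beta_0$. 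This is a one-step quantity, positive for logistic $\sigma$, that does not shrink with $L$ or $T$ (take e.g.\ $v=u$ and $e$ the top face, which is an output once the input face is a bottom edge).
\end{itemize}
Hence $\kbar^*_L\ge\inf_T\kbar_{L,T}\ge\beta_0/\alpha_{\max}$ for every $L$. So your bound $\kbar_{L,T}\le C_2(L+1)\rho(\bigT)^{L-1}$ is false for large $L$, and $\liminf_L(\kbar^*_L)^{1/L}\ge1$. Neither $\keff\le\rho(\bigT)$ nor $\rho(G_{L,T(L)})<1$ follows. Your entrywise bounds alone give only $\kbar\lesssim C_1^2(1-\rho(\bigT))^{-3}$, with $C_1$ up to order $\kappa$.

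The same objection already applies to $a$. Definition~\ref{def:gain} takes the supremum over input faces. With a bottom edge $(v,w_1)$ as input, the sibling edge $(v,w_2)$ is an output reached in one step, so $a_{L,T}$ does not decay in $L$ either. Your estimate for $a$ treats only top-face input.

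What is missing is a bookkeeping in which storage and transfer stay tied to position and to the geometry of the faces. One option is to use the composite path sum itself, indexed by microscopic distance, as the renormalised quantity instead of $a+bc/(1-d)$; that is what the cone-of-influence step of Theorem~\ref{thm:criterion} actually consumes. The composition and Fekete steps would then have to be re-derived for that quantity. As they stand, Proposition~\ref{prop:submult} and Corollary~\ref{cor:keff} cannot deliver the $L$-th root limit you need, because the lower bound above forbids geometric decay of $\kbar^*_L$.

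Two smaller points:
\begin{itemize}
\item Your treatment of $d$ leaves ``a bounded term''. That term would block $d\le\tfrac12$ unless it also vanishes as $T\to\infty$, which needs the time-integrability of sources from Lemma~\ref{lem:occupation}(1).
\item The remark $m_F\rho(M)\le\rho(\bigT)$ plays no role. For $\rho(M)\le1$ the condition $\kbar\rho(M)^{L_0}<1$ is automatic from $\kbar<1$.
\end{itemize}
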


\begin{proof}[Proof sketch]
The route sum defining the block resolvent gain $\kbar_{L,T}$ is a sub-family of the influence-path expansion: storage routes are influence paths that linger at a vertex between transmissions, and they are priced by the same iterated integrals. A route delivering forcing across $L$ levels is an influence path making $L$ level crossings; on a tree such a path has length $L+2r$ with $r\ge0$ backtracks, each backtrack paying two full step weights. Hence, by Lemma~\ref{lem:pathcount} and summation over $r$,
\[
\kbar_{L,T(L)}\;\le\;\sum_{n\ge L}\one^{\top}\bigT^{\,n}e_{i_0}\;\le\;C_\rho\,\rho(\bigT)^{L}
\quad\text{when }\rho(\bigT)<1,
\]
using the rank-one identity $\bigT^{\,n}=\rho(\bigT)^{\,n-1}\,\bigT$. The $d$ entry at horizon $T(L)=cL$ is small because sources are integrable in time (Lemma~\ref{lem:occupation}(1)) and stored mass relaxes at rate $\amin$ once the local occupations have passed. Taking $L$-th roots gives $\keff\le\rho(\bigT)$, and Corollary~\ref{cor:keff} converts $\rho(\bigT)<1$ into a finite-scale certificate.
\end{proof}

\begin{corollary}[Degree removal]\label{cor:degreeremoval}
The set $\{\rho(\bigT)<1\}$ strictly contains $\{\kappa<1\}\cap\{\text{saturated}\}$ in the closure of parameter space, and contains model families with $\kappa$ arbitrarily large. Explicitly: for any fixed $r:=\gmax\Ls/\amin<1$ and any $\rho(M)\le1$, choose, as $\dmax$ grows,
\[
b\ \ge\ \frac{2}{c_\sigma}\log\dmax,\qquad
\Lambda_b\ \ge\ c_1^{-1}\log\bigl(\dmax^{2}\,C_K\Ls\bigr),\qquad
\varepsilon(\delta)\ \le\ \dmax^{-2},
\]
and margins with $\nu_b\,m_F\le\dmax^{-1}$; then
\[
\rho(\bigT)\;\le\;r\,\rho(M)\;+\;\frac{\gmax(C_\sigma+2)}{\amin}\,\dmax^{-1}\;+\;\dmax^{-1}
\;\xrightarrow[\ \dmax\to\infty\ ]{}\;r\,\rho(M)\;<\;1,
\]
while $\kappa=r\,\dmax\to\infty$. Two remarks, made plainly. First, this is a statement about a family of models: the band, the noise, and the saturation margin are tuned as the degree grows, and no single model is taken to a limit. Second, the corollary removes the degree from the criterion and does not move $r$: the constraint $r<1$ is intrinsic to the method (Remark~\ref{rem:wall}), and the honest content of the extension is that near criticality ($\rho(M)\uparrow1$) the effective condition is $\gmax\Ls/\amin<1$; the degree has disappeared.
\end{corollary}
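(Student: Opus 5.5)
The plan is to read the bound straight off the explicit formula for $\rho(\bigT)$ in Theorem~\ref{thm:mainestimate}, treating it as a sum of three kinds of terms. The front term $(\rho(M)+\nu_b)m_F$ splits into $r\rho(M)$ and $\nu_b m_F$. The quiescent term is $\dmax\bar m_Q$. The saturated term is $\dmax m_S$. Each non-front term then gets its own parameter condition from the displayed list.

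First, by Definition~\ref{def:channelmatrix} we have $m_F\le\gmax\Ls/\amin=r$, so the front term is at most $r\rho(M)+\nu_b m_F\le r\rho(M)+\dmax^{-1}$ under the margin condition. Second, the condition $b\ge (2/c_\sigma)\log\dmax$ gives $C_\sigma e^{-c_\sigma b}\le C_\sigma\dmax^{-2}$. The condition on $\Lambda_b$ gives $C_K\Ls e^{-c_1\Lambda_b}\le\dmax^{-2}$. Hence $\dmax\bar m_Q\le(\gmax/\amin)(C_\sigma+1)\dmax^{-1}$. Third, $\varepsilon(\delta)\le\dmax^{-2}$ gives $\dmax m_S\le(\gmax/\amin)\dmax^{-1}$. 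Summing these three bounds yields the displayed inequality, with constant $C_\sigma+2$. Since $\gmax/\amin$ stays bounded along the family (for example, fixed), the right side tends to $r\rho(M)<1$. Meanwhile $\kappa=r\dmax\to\infty$ by the definition of $\kappa$.

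For containment, I would argue in the closure of parameter space. Take a saturated point with $\kappa<1$. Pushing $b$, $\Lambda_b$ and $\delta$ toward the saturated limit, in the way Lemmas~\ref{lem:occupation} and~\ref{lem:nearmiss} permit, makes the $Q$ and $S$ contributions and $\nu_b$ arbitrarily small. This leaves $\rho(\bigT)$ close to $r\rho(M)$. Since $\kappa=r\dmax$ with $\dmax\ge1$, and $\rho(M)\le\dmax$ because at most $\dmax$ children can fail, we get $r\rho(M)\le\kappa<1$. Strictness then follows from the family just constructed, which has $\kappa>1$.

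The main obstacle is showing that the margin requirement $\nu_b m_F\le\dmax^{-1}$ is compatible with the other choices. Lemma~\ref{lem:nearmiss} gives $\nu_b\le\dmax(e^{-c_1\Lambda_b}+e^{-c_2b})$, so it suffices that $e^{-c_1\Lambda_b}+e^{-c_2b}\le r^{-1}\dmax^{-2}$. I would first try to obtain this by enlarging $b$ to $\max(2/c_\sigma,\,2/c_2)\log\dmax+O(1)$, and noting that $\Lambda_b=\alpha(h_\theta-b)^2/\mu$ can still be made large by shrinking the noise $\mu$ (equivalently, raising $h_\theta$). One must check that $c_2=c_2(\alpha,\mu,c_-)$ does not degenerate under that tuning, which I expect to require keeping the drift $c_-$ bounded below via the margin hypothesis. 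The remaining steps are routine arithmetic.
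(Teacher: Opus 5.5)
Your derivation of the displayed bound is exactly the paper's proof: split $\rho(\bigT)$ from Theorem~\ref{thm:mainestimate} into its front, quiescent and saturated terms, then apply the three parameter choices and the margin condition $\nu_b m_F\le\dmax^{-1}$ term by term, arriving at the same constant $C_\sigma+2$. The paper's proof stops at that arithmetic, so two parts of your proposal are additions on points the paper leaves implicit rather than a different route: your containment argument via $\rho(M)\le\dmax$ (hence $r\rho(M)\le\kappa$), and your check through Lemma~\ref{lem:nearmiss} that the margin condition can actually be met.
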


\begin{proof}
Immediate from the formula for $\rho(\bigT)$ in Theorem~\ref{thm:mainestimate}: with the stated choices, $C_\sigma e^{-c_\sigma b}\le C_\sigma\dmax^{-2}$, $C_K\Ls e^{-c_1\Lambda_b}\le\dmax^{-2}$ and $\varepsilon(\delta)\le\dmax^{-2}$, so the two $\dmax$-weighted terms are $O(\dmax^{-1})$, and the front term is $r(\rho(M)+\nu_b)\le r\rho(M)+\dmax^{-1}$.
\end{proof}

\begin{example}[A point with $\kappa=5$ and provable decoupling]\label{ex:kappa5}
Logistic $\sigma$ ($\Ls=\tfrac14$, $C_\sigma=1$, $c_\sigma=1$), $\amin=1$, $\gmax=2$ (so $r=\tfrac12$), $\dmax=10$: $\kappa=5\ge1$ and \cite[Thm.~11.5]{K1} says nothing. Choose thresholds with band $b=6$ ($e^{-6}\approx2.5\times10^{-3}$), noise with $c_1\Lambda_b\ge10$, margins with $\varepsilon(\delta)\le10^{-3}$ and $\nu_b\le0.1$, and take $C_K\le4$ (constants not optimised); then
\[
\rho(\bigT)\;\le\;\tfrac12\bigl(\rho(M)+0.1\bigr)+2\cdot10\cdot\bigl(2.5\times10^{-3}+4\cdot\tfrac14 e^{-10}+10^{-3}\bigr)
\;\le\;\tfrac12\rho(M)+0.13\;<\;1
\]
for all $\rho(M)\le1$. Front decoupling, and with it every main theorem of \cite{K1}, holds unconditionally at these points.
\end{example}

\begin{example}[Moderate degree, $r$ near the wall]\label{ex:moderatedegree}
Logistic $\sigma$, $\amin=1$, $\gmax=3.6$ (so $r=0.9$), $\dmax=4$: $\kappa=3.6\ge1$. Choose $b=8$ ($e^{-8}\approx3.4\times10^{-4}$), $c_1\Lambda_b\ge9$, $\varepsilon(\delta)\le10^{-3}$, $\nu_b\le0.05$, $C_K\le4$; then
\[
\rho(\bigT)\;\le\;0.9\bigl(\rho(M)+0.05\bigr)+14.4\cdot\bigl(3.4\times10^{-4}+e^{-9}+10^{-3}\bigr)
\;\le\;0.9\,\rho(M)+0.07\;<\;1
\]
for all $\rho(M)\le1$. This is a bounded-degree point with $r=0.9$, at distance $0.1$ from the resonance wall of Remark~\ref{rem:wall}, where $\kappa=3.6$ and decoupling nevertheless holds unconditionally. Unlike Corollary~\ref{cor:degreeremoval}, no parameter here is taken large or small with the degree.
\end{example}

\section{The resonance regime, the rescoped conjecture, and outlook}\label{sec:outlook}

\begin{remark}[Why $\gmax\Ls/\amin\ge1$ is a genuine wall]\label{rem:wall}
Every bound in this paper prices the front-comoving channel at $(\rho(M)+\nu_b)\,\gmax\Ls/\amin$ per step, and this is not an artefact: a perturbation that rides the front strikes each vertex exactly in its sensitive window, where the coupling derivative genuinely is of order $\Ls$ and no quiescence or saturation discount exists. When $\gamma\Ls/\alpha\ge1$ and $\rho(M)$ is near $1$, this channel is neutral-to-expanding on the true dynamics, not merely in the estimate: heuristically, the front behaves like a chain of near-critically coupled oscillators passing a soliton-like disturbance, and one expects macroscopic synchronisation of failure times along lineages, that is, a genuine failure of conditional independence, hence of the Galton--Watson reduction, with the cluster law acquiring long-range timing correlations. We therefore conjecture a phase boundary rather than a technical gap.
\end{remark}

\begin{conjecture}[Decoupling phase boundary]\label{conj:phaseboundary}
There is a critical value $r_c\in(0,\infty)$ of $r=\gmax\Ls/\amin$ (at fixed $\rho(M)=1$, saturated regime, bounded degrees) such that front decoupling in the sense of \cite[Hyp.~2.4]{K1} holds for $r<r_c$ and fails, with asymptotically non-vanishing sibling failure-time correlations along the front, for $r>r_c$. Moreover $r_c\ge1$ by Theorem~\ref{thm:mainestimate} (with the band, noise, and margin choices available), and $r_c<\infty$.
\end{conjecture}

Three concluding observations. First, the division of labour is now sharp: Theorem~\ref{thm:criterion} is a permanent interface, so that any future initial-scale estimate, analytic or computer-assisted (Remark~\ref{rem:computation}), immediately yields the full reduction; Theorem~\ref{thm:mainestimate} is one such estimate, and the natural next improvement is to price the F-to-F entry of the channel matrix by the actual overlap of parent and child crossing windows rather than its worst case, which should replace $\Ls/\alpha$ by an overlap integral that is small when parent and child crossings are typically disjoint, and would push $r_c$ upward. The front timing statistics that such a refinement requires, the location and dispersion of crossing times along lineages, are exactly the objects of the front-propagation analysis of the companion paper \cite{KD2}, which is the intended interface between the two manuscripts. Second, the mechanism is physical and should be portable: ``the front is thin, the bulk is inert'' is the reason interacting-particle fronts on trees so often admit branching approximations, and the two-channel gain matrix of Definition~\ref{def:gain} together with the phase pricing of Definition~\ref{def:channelmatrix} is model-independent. Third, in the resonance regime the right object is no longer a Galton--Watson tree but a branching process with moving-average offspring correlations along the spine; identifying its universality class, and whether the $n^{-3/2}$ exponent of \cite[Thm.~6.2]{K1} survives, as it does for many weakly-dependent progeny laws, is, with Conjecture~\ref{conj:phaseboundary}, the honest frontier of the series.

\end{document}